\renewcommand{\geq}{\geqslant}
\renewcommand{\leq}{\leqslant}
\newtheorem{thm}{Theorem}
\newtheorem{conj}{Conjecture}
\newtheorem{cor}[thm]{Corollary}
\newtheorem{prop}[thm]{Proposition}
\newtheorem{lem}[thm]{Lemma}
\definecolor{darkgreen}{rgb}{0,0.4,0}
\definecolor{MyDarkBlue}{rgb}{0,0.08,0.50}
\definecolor{BrickRed}{rgb}{0.65,0.08,0}
\title[{Walks in the quarter plane, harmonic functions and conformal mappings}]{Random walks in the quarter plane,\\discrete harmonic functions and\\conformal mappings}
\author[Kilian Raschel (with an appendix by Sandro Franceschi)]{Kilian Raschel\\ \\(With an appendix by Sandro Franceschi)} 
\address{F\'ed\'eration Denis Poisson \& Laboratoire de Math\'ematiques et Physique Th\'eorique (Universit\'e de Tours, France) \& Laboratoire de Probabilit\'es et Mod\`eles Al\'eatoires (Universit\'e Pierre et Marie Curie, France)}
\email{sfrances@clipper.ens.fr}
\address{CNRS \& F\'ed\'eration Denis Poisson \& Laboratoire de Math\'ematiques et Physique Th\'eorique (Universit\'e de Tours, France)}
\email{Kilian.Raschel@lmpt.univ-tours.fr}
\keywords{Random walk in the quarter plane, discrete harmonic function, classical harmonic function, generating function, Martin boundary, exit time, conformal mapping}
\subjclass{Primary 60G50, 31C35; Secondary 60G40, 30F10}
\date{\today}
\begin{document}

\begin{abstract}
We propose a new approach for finding discrete harmonic functions in the quarter plane with Dirichlet conditions. It is based on solving functional equations that are satisfied by the generating functions of the values taken by the harmonic functions. As a first application of our results, we obtain a simple expression for the harmonic function that governs the asymptotic tail distribution of the first exit time for random walks from the quarter plane. As another corollary, we prove, in the zero drift case, the uniqueness of the discrete harmonic function. 
\end{abstract}

\maketitle

\section{Introduction}
\setcounter{equation}{0}

\subsection*{Context}
Random processes conditioned on staying in cones of ${\bf Z}^d$ arouse a great interest in the mathematical community, as they appear in several distinct domains: quantum random walks \cite{Bi1,Bi3}, random matrices \cite{Dy62}, non-colliding random walks \cite{DW1,DW,EK}, etc. A usual way to realize this conditioning consists in using Doob $h$-transforms, thanks to functions which are harmonic for the process, positive within the cone and equal to zero elsewhere---or equivalently harmonic and positive for the underlying killed process. It is therefore natural to be interested in finding some (all, if possible) positive harmonic functions for processes in cones of ${\bf Z}^d$ killed at the first exit time from the cone. 

In the literature, results on harmonic functions for killed processes in cones are broken up, and most of them concern very particular cones, as half spaces ${\bf Z}_+\times {\bf Z}^{d-1}$ and orthants ${\bf Z}_+^d$. Regarding random walks with {\it non-zero drift}, general results are obtained in \cite{IRL,KuRa} for the above domains when $d=2$: the Martin boundary is found, and happens to be composed of infinitely many harmonic functions. For random walks with {\it zero drift}, the results are rare, and typically require a strong underlying structure: the random walks are cartesian products in \cite{PW}; they are associated with Lie algebras in \cite{Bi1,Bi3}; certain reflexion groups are supposed to be finite in \cite{RaSp4}. This is problematic, as these harmonic functions are useful to construct other important processes (see above). Last but not least, knowing the harmonic functions for zero drift random walks in ${\bf Z}_+^{d-1}$ is necessary for building the harmonic functions of walks with drift in ${\bf Z}_+^d$, see \cite{IR2}.

In this article, we prove that for the whole class of walks with small steps and zero drift killed at the boundary of the quadrant ${\bf Z}_+^2$, there exists exactly one non-zero discrete harmonic function (up to multiplicative constants). Further, the unique harmonic function is expressed in terms of sine and arcsine functions. Our approach\footnote{We wish to thank D.\ Denisov and V.\ Wachtel for suggesting us this new approach, during a workshop at Eindhoven University (The Netherlands) in February 2012.} is new---to the best of our knowledge---and is based on functional equations that satisfy the generating functions of the values taken by the harmonic functions.

Before presenting it (in point \ref{possibility-functional-equation} below), we notice that in the literature, there are several ways to obtain harmonic functions:

\renewcommand{\thesubsubsection}{\arabic{subsubsection}}
\subsubsection{}\label{possibilit-discrete-laplacian}The most elementary approach just uses the definition of a harmonic function: if $P$ denotes the generator of the Markov chain under consideration, we must have $P f =f$ inside of the cone, and $f=0$ outside. Unfortunately, except for a few particular cases (see, e.g., \cite{Woess}), solving directly this discrete Dirichlet problem is out of range.

\subsubsection{} \label{possibility-Martin-boundary} A second one, based on Martin boundary theory, is used in \cite{Bi1,IR2,IR1,IRL,KuRa,RaSp4}. Briefly said, given a (transient) Markov process, this theory provides explicit expressions (typically, integral representations) for all its (super)harmonic functions. To find the Martin boundary, it suffices \cite{Dynkin} to compute the asymptotic behavior of quotients of Green functions. More details are given in Section \ref{subsec:MB}. The main and profound difficulty of this approach lies in finding the (asymptotics of quotients of) Green functions.

\subsubsection{}\label{possibility-tail-distribution} A third one, which is studied in \cite{DW1,DW,EK,KS1}, is based on the fact that in many examples, the asymptotic tail distribution of the exit time $\tau$ from the cone of the random walk started at $x$ is governed by a certain positive harmonic function $V(x)$: 
     \begin{equation}
     \label{eq:asymptotic-DW}
          {\bf P}_x[\tau>n] = \varkappa V(x)F(n)(1+o(1)),\qquad n\to\infty,
     \end{equation}
where $\varkappa>0$. In comparison with point \ref{possibility-Martin-boundary}, this way presents the drawback of giving at most one harmonic function.

\subsubsection{} \label{possibility-Varo} Another possibility consists in comparing the solutions to the discrete and the classical (continuous) Dirichlet problems, and to use the numerous results which exist concerning the latter. Unfortunately, this comparison only exists for bounded domains (among which truncated cones), see \cite{Va}. We also refer to \cite{Va1,Va2,Va3} for a study of some properties of discrete (sub)harmonic functions, which are constructed from the continuous analogues.

\subsubsection{} \label{possibility-functional-equation} A last and new approach consists in showing that the generating function of the values taken by the harmonic functions satisfies some functional equation, and in solving the latter. While the functional equation is easily obtained (it turns out to be a rewriting of the Dirichlet problem of point \ref{possibilit-discrete-laplacian}), its resolution will be more arduous!

\smallskip

\smallskip

\smallskip
     
We now briefly present the approach of point \ref{possibility-functional-equation} in the one-dimensional case. Introduce $X = (X(n))_{n\in{\bf Z}_+}$ a random walk on ${\bf Z}_+$, with jumps $\{p_{-1},p_{1}\}$ to the two nearest neighbors, and killed at the boundary of ${\bf Z}_+$ (here the boundary is simply the origin $\{0\}$), see Figure~\ref{fig:dim1}. 
\unitlength=0.6cm
\begin{figure}[t!]
  \begin{center}
    \hspace{-0.6cm}
    \begin{picture}(4.42,2.5)
    \thicklines
    \put(-4.5,1){\vector(1,0){14}}
    \put(-4.5,1){\circle*{0.3}}
    \put(-2.5,1){\circle*{0.3}}
    \put(-0.5,1){\circle*{0.3}}
    \put(1.5,1){\circle*{0.3}}
    \put(3.5,1){\circle*{0.3}}
    \put(5.5,1){\circle*{0.3}}
    \put(7.5,1){\circle*{0.3}}
    \put(-4.65,0.){$0$}
    \put(-1.1,0.){$i-1$}
    \put(1.4,0.){$i$}
    \put(2.9,0.){$i+1$}
    \put(9.15,0.){${\bf Z}_+$}
    \thinlines
    \put(2,1.5){\vector(1,0){1.5}}
    \put(2,1.5){\vector(-1,0){2.5}}
    \put(1.5,1.5){\circle*{0.15}}
    \put(0.2,1.9){$p_{-1}$}
    \put(2.1,1.9){$p_1$}
    \put(-4.5,1.5){\vector(1,0){2}}
    \put(-4.5,1.5){\circle*{0.15}}
    \put(-3.7,1.9){$0$}
  \end{picture}
\end{center}
    \vspace{-2mm}
    \caption{Transition probabilities of the discrete random walk in the one-dimensional case}
    \label{fig:dim1}
\end{figure}
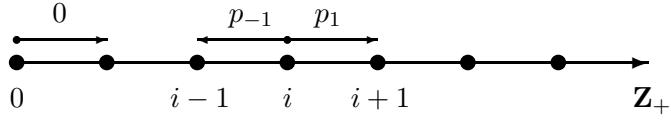
Let $f$ be a discrete harmonic function for this process. We have $f(0)=0$ and for $i_0\geq 1$, the recurrence relation
\begin{equation}
\label{eq:one-dimensional-harmonicity}
     f(i_0) = p_1 f(i_0+1)+p_{-1}f(i_0-1). 
\end{equation}
Let $H(x) =\sum_{i_0\geq 1}f(i_0)x^{i_0-1}$. Multiplying \eqref{eq:one-dimensional-harmonicity} by $x^{i_0-1}$ and summing w.r.t.\ $i_0\geq 1$ yields
\begin{equation*}
     (x^2 p_{-1} - x +p_1)H(x) = p_1 f(1).
\end{equation*}
It is then obvious to deduce an expression of $H(x)$, and also of $f(i_0)$ for any $i_0\geq 1$, by expanding $H(x)$ in power series, or via Cauchy's formul\ae. 
%
%In particular, in the case of a zero-mean random walk ($p_1 = p_{-1}$), we have
%\begin{equation*}
%     H(x) = \frac{f(1)}{(1-x)^2} = f(1)\textstyle\sum_{i_0\geq 1} i_0x^{i_0-1}.
%\end{equation*}

\subsection*{Results}
This work aims at presenting the approach summarized in point \ref{possibility-functional-equation} (i.e., finding discrete harmonic functions by solving functional equations) for the walks with small steps (i.e., with jumps only to the eight nearest neighbors) in the quarter plane ${\bf Z}_+^2$, see Figure~\ref{WWSS}.
Our motivations to restrict ourselves to this class are the following: First, it is of special interest in probability \cite{Bi1,CB,FI,FIM,FR2,IRL,KuRa,RaSp4} and in combinatorics as well \cite{BMM,Ra}. Second, our way to solve the functional equation we shall obtain for (the generating function of) the harmonic functions needs complex analysis, and thus dimension $2$. The hypothesis on the small steps is less crucial, and will be commented in Section \ref{Extensions}. Further, we shall mostly assume that the increments of the random walk have zero mean, as it is the most interesting case (see above). This article is hinged on the following main results:
\setcounter{subsubsection}{0}
\subsubsection{}
 \label{result-explicit} We first state a functional equation satisfied by the generating function of the harmonic function (Section \ref{explicit}). We solve it and we show that the solutions are closely related to a certain conformal mapping (Section \ref{solving}). 
     
\subsubsection{} \label{result-MB} We then prove that for the class of zero-mean random walks with small steps in the quarter plane, there is exactly one harmonic function (Section \ref{MB}). 
    
\subsubsection{}\label{result-DW} We also obtain the following two important corollaries (Section \ref{MB}): in \cite{DW}, it is proved that for the zero-mean random walks in the quarter plane, Equation \eqref{eq:asymptotic-DW} holds, where $V(x)$ is shown to be harmonic, but is not made explicit. Here we find an expression for $V(x)$, as a Taylor coefficient of a (relatively) simple function involving sine and arcsine functions.  The second consequence is about lattice path enumeration, and more precisely about the counting of walks with small steps in the quarter plane. Recently, in \cite[Section 1.5]{DW}, the asymptotics of the number of excursions (walks starting and ending at given points) was obtained, up to some multiplicative constant involving two harmonic functions. The analysis we lead here allows us to obtain explicit expressions for both these functions.

\subsubsection{} We relate discrete harmonic functions to their continuous analogues and to the group of the walk, a notion introduced by Malyshev in \cite{MAL} (Section \ref{sec:further_properties}).
    
\subsubsection{}\label{result-extensions} We present possible extensions of our approach and results (Section \ref{Extensions}). Firstly, we show that our methods do work for random walks with non-zero mean increments. In particular, we obtain expressions for the harmonic functions in that case (this improves some results in \cite{IRL,KuRa}). Regarding the non-zero drift case, we also prove that all harmonic functions converge to the unique harmonic function, as the drift goes to $0$. Further, we see that our approach gives expressions for  $t$-harmonic functions (i.e., functions $f$ such that $Pf = tf$ within the cone, $P$ being the generator of the random walk and $t>0$). %Finally, we propose some ideas for other possible extensions.

\subsubsection{} In Appendix \ref{sec:sandro} we are interested in the approach in the continuous case: namely, we prove a functional equation for the Laplace transform of the classical harmonic function, and we show how it provides the well-known expression for the harmonic function.

\section{Functional equations for harmonic functions}
\setcounter{equation}{0}
\label{explicit}

\subsection{Harmonic functions for walks with small steps in the quarter plane}

Denote by $(X,Y)=(X(n),Y(n))_{n\in{\bf Z}_+}$ a random walk in the quarter plane ${\bf Z}_+^2$, and let ${\bf P}_{(i_{0},j_{0})}[\mathscr{E}]$ be the probability of event $\mathscr{E}$ conditional on $(X(0),Y(0))=(i_{0},j_{0})$. Throughout we shall make the following assumptions:
\begin{enumerate}[label=(H\arabic{*}),ref={\rm (H\arabic{*})}]
     \item \label{small_jumps} The walk is homogeneous inside of the quarter plane, with transition probabilities $\{p_{i,j}\}_{-1\leq i,j\leq 1}$ to the eight nearest neighbors (we further assume that $p_{0,0}=0$), see Figure \ref{WWSS};
     \item \label{non_degenerate} In the list $p_{1,1},p_{1,0},p_{1,-1},p_{0,-1},p_{-1,-1},p_{-1,0},p_{-1,1},p_{0,1}$,
                 there are no three~consecutive zeros;
     %\item \label{non_degenerate2} $p_{1,1}+p_{-1,1}+p_{-1,-1}+p_{1,-1}<1$;
     \item \label{drift} The drifts are zero: $\sum_{-1\leq i,j\leq 1}i p_{i,j} = 0$ and $\sum_{-1\leq i,j\leq 1}j p_{i,j} =  0$.
\end{enumerate}
\unitlength=1.1cm
\begin{figure}[t]
        \begin{picture}(6,5.5)
    \thicklines
    \put(1,1){{\vector(1,0){4.5}}}
    \put(1,1){\vector(0,1){4.5}}
    \thinlines
    \put(3,3){\vector(1,1){1}}
    \put(3,3){\vector(-1,-1){1}}
    \put(3,3){\vector(1,0){1}}
    \put(3,3){\vector(-1,0){1}}
    \put(3,3){\vector(0,1){1}}
    \put(3,3){\vector(0,-1){1}}
    \put(3,3){\vector(-1,1){1}}
    \put(3,3){\vector(1,-1){1}}
    \put(4.05,4.2){$p_{1,1}$}
    \put(4.05,3.2){$p_{1,0}$}
    \put(4.05,2.2){$p_{1,-1}$}
    \put(1.05,3.7){$p_{-1,1}$}
    \put(1.05,2.7){$p_{-1,0}$}
    \put(1.05,1.7){$p_{-1,-1}$}
    \put(3.05,1.7){$p_{0,-1}$}
    \put(2.45,4.2){$p_{0,1}$}
    \linethickness{0.1mm}
    \put(1,2){\dottedline{0.1}(0,0)(4.5,0)}
    \put(1,3){\dottedline{0.1}(0,0)(4.5,0)}
    \put(1,4){\dottedline{0.1}(0,0)(4.5,0)}
    \put(1,5){\dottedline{0.1}(0,0)(4.5,0)}
    \put(2,1){\dottedline{0.1}(0,0)(0,4.5)}
    \put(3,1){\dottedline{0.1}(0,0)(0,4.5)}
    \put(4,1){\dottedline{0.1}(0,0)(0,4.5)}
    \put(5,1){\dottedline{0.1}(0,0)(0,4.5)}
\end{picture}
  \vspace{-10mm}
\caption{Walks with small steps in the quarter plane}
\label{WWSS}
\end{figure}
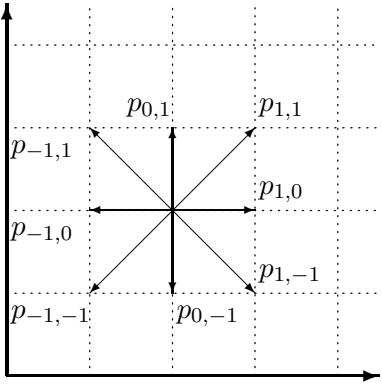
With assumption \ref{small_jumps} we can use the general framework for random walks in the quarter plane developed by Fayolle, Iasnogorodski and Malyshev \cite{FIM}. Assumption \ref{non_degenerate} excludes degenerate random walks, which can typically be analyzed using easier methods. We make assumption \ref{drift} because this is the most interesting case, as seen in the introduction. Moreover, assumption \ref{drift} guarantees that the random walk will hit the boundary with probability one \cite{FIM}. 

In this work we are interested in functions $f$ which are discrete harmonic for these random walks, i.e., in functions $f=(f(i_0,j_0))_{(i_0,j_0)\in{\bf Z}_+^2}$ such that:
\begin{enumerate}[label=(P\arabic{*}),ref={\rm (P\arabic{*})}]
     \item\label{property_harmonicity}For any $i_0,j_0\geq 1$, $
     f(i_0,j_0) = \textstyle \sum_{-1\leq i,j\leq 1}p_{i,j}f(i_0+i,j_0+j)$.
\end{enumerate}
More specifically, we assume that:
\begin{enumerate}[label=(P\arabic{*}),ref={\rm (P\arabic{*})}]
\setcounter{enumi}{1}
     \item\label{property_zero}If $i_0=0$ or $j_0=0$, then $f(i_0,j_0)=0$;
     \item\label{property_positive}If $i_0,j_0>0$, then $f(i_0,j_0)>0$.
\end{enumerate}
A harmonic function $f$ satisfying \ref{property_zero} and \ref{property_positive} is positive harmonic for the random walk killed at the boundary of the quarter plane. In words, denoting by 
\begin{equation*}
     \tau=\inf\{ n\geq 1 : X(n)\leq 0\ \text{or}\ Y(n)\leq 0\} 
\end{equation*}
the first exit time of the random walk from the interior of the quarter plane, one has that
\begin{equation*}
     f(i_0,j_0) ={\bf E}_{(i_0,j_0)}[f(X(1),Y(1)),\tau>1],\qquad \forall i_0,j_0>0.
\end{equation*}

\subsection{Structure of the remainder of Section \ref{explicit}}

We fix a harmonic function $f$ satisfying \ref{property_harmonicity}, \ref{property_zero} and \ref{property_positive}. We first obtain, in Section \ref{AFFE}, a functional equation for the generating function
\begin{equation}
\label{eq:generating_functions_harmonic_functions}
     H(x,y)= \textstyle \sum_{i_0,j_0\geq 1} f(i_0,j_0) x^{i_0-1}y^{j_0-1}.
\end{equation}
This functional equation is stated in \eqref{eq:functional_equation}. It involves a certain bivariate polynomial $L(x,y)$, see \eqref{eq:def_L}, that we study in Section \ref{Notations}. We then state and prove, in Sections \ref{ASFE} and \ref{subsec:proof_lem:BPV}, that both generating functions
\begin{equation}
\label{eq:generating_functions_harmonic_functions_uni}
     H(x,0)=\textstyle \sum_{i_0\geq 1} f(i_0,1) x^{i_0-1},
     \qquad H(0,y)= \sum_{j_0\geq 1} f(1,j_0) y^{j_0-1},
\end{equation}
satisfy certain (rather simple) boundary value problems\footnote{Generally speaking, these are problems of finding an analytic function in a certain domain from a given relation between the boundary values of its real and its imaginary part.}. Finally, starting from the latter, we obtain explicit expressions of $H(x,0)$ and $H(0,y)$, then of $H(x,y)$ via \eqref{eq:functional_equation}, and finally of $f(i_0,j_0)$, for any $i_0,j_0\geq 1$, via the classical Cauchy's formul\ae:
\begin{equation}
\label{eq:Cauchy}
     f(i_0,j_0)=\frac{1}{(2\pi i)^2}\iint \frac{H(x,y)}{x^{i_0}y^{j_0}}\text{d}x\text{d}y,
\end{equation}
where the domain of integration is $\{x\in{\bf C} : \vert x\vert =\epsilon\}\times \{y\in{\bf C} : \vert y\vert =\epsilon\}$, for any $\epsilon\in[0,1)$.

\subsection{A first functional equation, and the exponential growth rate of harmonic functions}
\label{AFFE}

The following polynomial---also called the kernel of the random walk---will be of the highest importance:
     \begin{equation}
     \label{eq:def_L}
          L(x,y)= x y[ \textstyle\sum_{-1\leq i,j\leq 1}
          p_{i,j }x^{-i} y^{-j}  -1].
     \end{equation}
We note that the polynomial $L(x,y)$ is related in a simple way to the transition probabilities of the random walk under consideration.     
\begin{lem}
\label{lem:fe}
For any random walk with property \ref{small_jumps} {\rm(}but not necessarily \ref{non_degenerate} and \ref{drift}{\rm)}, the generating function $H(x,y)$ defined in \eqref{eq:generating_functions_harmonic_functions} satisfies the functional equation
\begin{equation}
\label{eq:functional_equation}
     L(x,y)H(x,y) = L(x,0) H(x,0) + L(0,y) H(0,y)-L(0,0) H(0,0).
\end{equation}
\end{lem}

\begin{proof}
The proof of \eqref{eq:functional_equation} (and thus of Lemma \ref{lem:fe}) is a very simple consequence of \ref{property_harmonicity} and \ref{property_zero}. 
\end{proof}
Though the functional equation \eqref{eq:functional_equation} is just a rewriting of the recurrence relations \ref{property_harmonicity}, it is a crucial step in the way to determine the harmonic functions. We now make a series of remarks on this functional equation.

\subsubsection*{Example 1 (the simple random walk)}
Let us verify \eqref{eq:functional_equation} for the simple random walk, i.e., for the model with transition probabilities $p_{0,1} = p_{1,0} = p_{0,-1} = p_{-1,0} = 1/4$, see Figure \ref{SRW}. In that case, it is well known (see, e.g., \cite{PW}) that there exists a unique positive harmonic function (up to multiplicative constants), with the product form $f(i_0,j_0) = i_0 j_0$. With \eqref{eq:generating_functions_harmonic_functions} and \eqref{eq:generating_functions_harmonic_functions_uni}, we then obtain 
\begin{equation}
\label{eq:expression_HHH_SRW}
     H(x,0) = \frac{1}{(1-x)^2},
     \qquad H(0,y) = \frac{1}{(1-y)^2},
     \qquad H(x,y) = \frac{1}{(1-x)^2}\frac{1}{(1-y)^2}.
\end{equation}
Furthermore, $L(x,y) =(y/4)(x-1)^2+(x/4)(y-1)^2$, $L(x,0) = x/4$, $L(0,y) = y/4$ and $L(0,0) = 0$, see \eqref{eq:def_L}, and it becomes straightforward to verify that \eqref{eq:functional_equation} does hold.
\unitlength=1.1cm
\begin{figure}[t]
        \begin{picture}(6,5.5)
    \thicklines
    \put(1,1){{\vector(1,0){4.5}}}
    \put(1,1){\vector(0,1){4.5}}
    \thinlines
    \put(3,3){\vector(1,0){1}}
    \put(3,3){\vector(-1,0){1}}
    \put(3,3){\vector(0,1){1}}
    \put(3,3){\vector(0,-1){1}}
    \put(4.05,3.15){$1/4$}
    \put(1.45,2.7){$1/4$}
    \put(3.05,1.7){$1/4$}
    \put(2.45,4.15){$1/4$}
    \linethickness{0.1mm}
    \put(1,2){\dottedline{0.1}(0,0)(4.5,0)}
    \put(1,3){\dottedline{0.1}(0,0)(4.5,0)}
    \put(1,4){\dottedline{0.1}(0,0)(4.5,0)}
    \put(1,5){\dottedline{0.1}(0,0)(4.5,0)}
    \put(2,1){\dottedline{0.1}(0,0)(0,4.5)}
    \put(3,1){\dottedline{0.1}(0,0)(0,4.5)}
    \put(4,1){\dottedline{0.1}(0,0)(0,4.5)}
    \put(5,1){\dottedline{0.1}(0,0)(0,4.5)}
\end{picture}
  \vspace{-10mm}
\caption{The simple random walk in the quarter plane}
\label{SRW}
\end{figure}
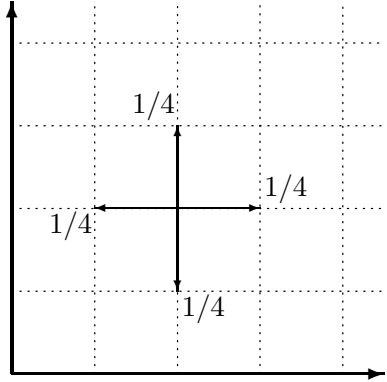

\subsubsection*{Around the functional equation \eqref{eq:functional_equation}}
In the literature, there actually exist many papers on problems that can be reduced to solving functional equations close to \eqref{eq:functional_equation}. This arises in probability theory \cite{FI,FIM,RaSp4}, in queueing theory \cite{CB}, in combinatorics \cite{BMM,Ra}, etc. The functional equations typically have the form
\begin{equation}
\label{eq_fonc_general}
     K(x,y)Q(x,y)=k(x,y)q(x)+\widetilde{k}(x,y)\widetilde{q}(y)+k_{0}(x,y)q_{(0,0)}+\kappa(x,y),
\end{equation}
where $K(x,y)$, $k(x,y)$, $\widetilde k(x,y)$, $k_0(x,y)$ and $\kappa(x,y)$ are known (they are simple functions related to the model, e.g., via the transition probabilities), while the functions $Q(x,y)$, $q(x)$, $\widetilde q(y)$ and $q_{(0,0)}$ are unknown (they may represent the generating functions of Green functions, absorption probabilities, stationary probabilities, counting numbers, etc.). 

Our functional equation \eqref{eq:functional_equation} has three advantages in comparison with the general equation \eqref{eq_fonc_general}:
\begin{itemize}
     \item[\textcolor{green}{\CheckmarkBold}]The coefficient $L(x,0)$ (resp.\ $L(0,y)$, $L(0,0)$) in front of the unknown $H(x,0)$ (resp.\ $H(0,y)$, $H(0,0)$) depends on at most one of the two variables $x$ and $y$;
     \item[\textcolor{green}{\CheckmarkBold}]In the RHS of \eqref{eq:functional_equation}, there is no non-homogeneous term $\kappa(x,y)$ depending on $x$ and $y$;
     \item[\textcolor{green}{\CheckmarkBold}]The unknown functions $q(x)$, $\widetilde q(y)$ and $q_{(0,0)}$ can be expressed in terms of the single unknown $Q(x,y)$, as indeed $q(x)=Q(x,0)$, $\widetilde q(y)=Q(0,y)$ and $q_{(0,0)}=Q(0,0)$.
\end{itemize}
For these reasons, the analytic description of the solutions to the functional equation \eqref{eq:functional_equation} we shall propose here will be more precise than in other works. On the other hand, our particular situation also presents a disadvantage: 
\begin{itemize}
     \item[\textcolor{red}{\XSolidBold}] We shall prove that the unknown functions $H(x,0)$ and $H(0,y)$ satisfy certain boundary value problems (see Lemma \ref{lem:BPV} below). It turns out that these functions will have a singularity on the set defining the boundary condition, which will make the analysis complicated, since we will have to estimate a priori the kind and the order of this singularity.
\end{itemize}

\subsubsection*{Domain of validity of \eqref{eq:functional_equation}}
Equation \eqref{eq:functional_equation} holds (at least) on the domain $\{(x,y)\in{\bf C}^2 : \vert x\vert< 1,\,\vert y\vert < 1\}$. Indeed, we have:

\begin{lem}
\label{lemma:radius_convergence_zero_drift}
For a random walk satisfying \ref{small_jumps}, \ref{non_degenerate} and \ref{drift}, the radius of convergence of $H(x,0)$ and $H(0,y)$ is equal to or larger than one. 
\end{lem}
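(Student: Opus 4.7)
The proof hinges on a remarkable simplification of the kernel that follows from \ref{drift}. Writing $q_j := \sum_i p_{i,j}$ and $r_i := \sum_j p_{i,j}$, the zero-drift hypothesis gives $q_{-1} = q_1 =: q$ and $r_{-1} = r_1 =: p$, both positive by \ref{non_degenerate}. A direct computation from \eqref{eq:def_L} then yields
\begin{equation*}
L(1,y) = q(y-1)^2, \qquad L(x,1) = p(x-1)^2.
\end{equation*}
In particular $L(1,1) = 0$, while $L(1,0) = q$, $L(0,1) = p$, $L(0,0) = p_{1,1}$ and $H(0,0) = f(1,1)$.

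The divergence $H(1,0) = H(0,1) = +\infty$ is the heart of the lemma, and I would attack it first. Assume for contradiction that both are finite. Since $L(1,1) = 0$, the functional equation \eqref{eq:functional_equation} at $(x,y) = (1,1)$ collapses to
\begin{equation*}
q\, H(1,0) + p\, H(0,1) = p_{1,1}\, f(1,1).
\end{equation*}
Positivity \ref{property_positive} gives $H(1,0), H(0,1) \geq f(1,1) > 0$, so $(p+q)\, f(1,1) \leq p_{1,1}\, f(1,1)$; expanding, $p_{-1,1} + p_{0,1} + p_{1,1} + p_{1,0} + p_{1,-1} \leq 0$, a sum of nonnegative terms, so each vanishes. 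But then $p_{1,1}, p_{1,0}, p_{1,-1}$ are three consecutive zeros, contradicting \ref{non_degenerate}. To rule out the mixed case (one finite and the other infinite), I would specialize \eqref{eq:functional_equation} to $x = 1$ and let $y \to 1^-$ along the reals: if $H(0,1) = +\infty$, then by monotone convergence the right-hand side tends to $+\infty$, forcing $q(y-1)^2 H(1,y) \to +\infty$; a Tauberian analysis then shows the coefficients $U_{j_0} := \sum_{i_0} f(i_0, j_0)$ of $H(1, y)$ grow at least linearly in $j_0$, and combined with the harmonicity relation on the row $j_0 = 1$ and the hypothesis $H(1, 0) < \infty$ this should produce a contradiction, reducing to the previous case.

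For the statement about the radius of convergence, the upper bound $\leq 1$ is immediate from $H(1,0) = +\infty$ just established. The lower bound $\geq 1$ requires a subexponential growth bound on $f(i_0,1)$. Iterating the harmonic identity yields only the exponential bound $f(i_0,1) \leq f(1,1)/p_{1,0}^{i_0-1}$ (when $p_{1,0} > 0$); its refinement to $f(i_0,1) \leq C_\varepsilon (1+\varepsilon)^{i_0}$ for every $\varepsilon > 0$ would follow from a discrete boundary Harnack principle valid in the zero-drift regime, or equivalently from the observation that any positive harmonic function is dominated up to a constant by the polynomially bounded one produced in \cite{DW}. I expect the handling of the mixed case in the divergence proof, which involves an indeterminate form $0 \cdot \infty$ and a case analysis dictated by \ref{non_degenerate}, to be the main obstacle.
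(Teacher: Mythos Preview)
Your proposal attempts something the paper itself flags as open: immediately after stating this lemma, the text notes that ``it is the only place in this article where a result is not derived as a consequence of the functional equation \eqref{eq:functional_equation} (and it is an open problem to decide whether it is possible to do so).'' Your argument is precisely such an attempt, and it hits the expected obstruction.

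The concrete gap is already in your ``both finite'' case. Equation \eqref{eq:functional_equation} is an identity of formal power series; it becomes a numerical equality only at points where $H(x,y)$ converges. Assuming $H(1,0),H(0,1)<\infty$ shows that the one-variable series $H(x,0)$ and $H(0,y)$ have radius at least~$1$, but says nothing about convergence of the bivariate series $H(x,y)$ on the closed bidisc. Hence you cannot ``evaluate \eqref{eq:functional_equation} at $(1,1)$'': the left-hand side is $L(1,1)H(1,1)=0\cdot\infty$, genuinely indeterminate, and no limiting argument along $(x,y)\to(1,1)$ is available without first knowing that $H(x,y)$ is finite along the path---which is exactly what is in doubt. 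The same difficulty contaminates your mixed case (where you already anticipate a $0\cdot\infty$) and your lower bound on the radius, which you leave to an unproved Harnack-type comparison or an appeal to~\cite{DW}.

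The paper's proof (Appendix~\ref{ap:proof-IR}) takes a completely different, probabilistic route. It first establishes radius $\geq 1$ directly, via the Harnack bound $f(i,1)/f(1,1)\leq G^{i,1}_{i,1}/G^{1,1}_{i,1}$ and a large-deviations lower bound on the Green function $G^{1,1}_{i,1}$ (through the rate function $(\log R)^*$ of the step distribution), yielding $\limsup_i i^{-1}\log f(i,1)\leq 0$. Only once this subexponential growth is secured does the complex-analytic machinery apply: Lemma~\ref{lem_constant} then forces $H(1,0)=\infty$, since otherwise $L(x,0)H(x,0)$ would be continuous on all of $\overline{\mathscr G_X}$ and hence constant, contradicting positivity. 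Your order of attack---divergence first, growth bound second, both extracted from \eqref{eq:functional_equation}---is the reverse, and the circularity cannot be broken with the tools you invoke.
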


The proof of Lemma \ref{lemma:radius_convergence_zero_drift} we shall give here\footnote{We are grateful to I.\ Ignatiuk-Robert for suggesting us the ideas of this proof.} uses large deviation theory. It is worth noting that it is the only place in this article where a result is not derived as a consequence of the functional equation \eqref{eq:functional_equation} (and it is an open problem to decide whether it is possible to do so).

\begin{proof}[Proof of Lemma \ref{lemma:radius_convergence_zero_drift}]
%Define $\rho_x$ and $\rho_y$ as the radius of convergence of $H(x,0)$ and $H(0,y)$, respectively. We must have $\rho_x\leq 1$ or $\rho_y\leq 1$, since otherwise the harmonic function $(f(i_0,j_0))_{(i_0,j_0)\in{\bf Z}_+^2}$ would be bounded, and we know that the only bounded function satisfying \ref{property_harmonicity}, \ref{property_zero} and \ref{property_positive} is zero. By the classical Pringsheim's theorem, we have $H(\rho_x,0)=H(0,\rho_y)=\infty$. We know prove that necessarily, $\rho_x = X_0(\rho_y)$ or $X_1(\rho_y)$. If not, evaluating the functional equation \eqref{eq:functional_equation} at $()$
By symmetry, it is enough to consider only $H(x,0)$. We prove that its radius of convergence is at least $1$. To that purpose, we show that 
\begin{equation*}
     \limsup_{i\to\infty} \frac{1}{i}\log h({i,1})\leq 0. 
\end{equation*}

Let us define the first hitting time of state $(i_1,j_1)$
\begin{equation*}
     T_{i_1,j_1}=\inf\{n\in {\bf Z}_+ : (X(n),Y(n))=(i_1,j_1)\}, 
\end{equation*}
as well as the Green functions 
\begin{equation*}
     G^{i_0,j_0}_{i_1,j_1} = \textstyle\sum_{n\in {\bf Z}_+} {\bf P}_{(i_0,j_0)}[(X(n),Y(n))=(i_1,j_1)]. 
\end{equation*}
The classical Harnack inequality yields
\begin{equation*}
     \frac{f(i,1)}{f(1,1)}\leq \frac{1}{{\bf P}_{(1,1)}[T_{i,1}<\infty]} = \frac{G^{i,1}_{i,1} }{G^{1,1}_{i,1}}.
\end{equation*}
First, we have that $\sup_{i\geq 1}G^{i,1}_{i,1}<\infty$. Indeed, the Green functions $G^{i,1}_{i,1}$ for the quadrant ${\bf Z}_+^2$ are smaller than the Green functions $\widetilde G^{i,1}_{i,1}$ for the half plane ${\bf Z}\times {\bf Z}_+$. The latter domain being invariant by horizontal translations, the Green functions $\widetilde G^{i,1}_{i,1}$ do not depend on $i$. Accordingly, we have (see \cite{IR0} for the last inequality)
\begin{equation*}
     \sup_{i\geq 1}G^{i,1}_{i,1}\leq \sup_{i\geq 1}\widetilde G^{i,1}_{i,1}=\widetilde G^{1,1}_{1,1}<\infty.
\end{equation*}
To conclude, it is enough to prove that
\begin{equation}
\label{eq:tptptp}
     \liminf_{i\to\infty} \frac{1}{i}\log G^{1,1}_{i,1}\geq 0.
\end{equation}
To that purpose, let us first use the last inequality in \cite[Proof of Proposition 4.2]{IR0} (which is valid not only for the half plane---the topic of \cite{IR0}---but also for the quarter plane); we obtain
\begin{equation}
\label{eq:less_powerful}
     \liminf_{i\to\infty} \frac{1}{i}\log G^{1,1}_{i,1}\geq -\inf _{\phi}I_{[0,T]}(\phi),
\end{equation}
where 
\begin{itemize}
     \item the infimum in the RHS is taken over all trajectories $\phi:[0,T]\to {\bf R}_+^2$ such that $\phi(0)=(0,0)$ and $\phi(T)=(1,0)$;
     \item the rate function $I_{[0,T]}(\phi)$ equals $\int_0^T L(\phi(t))\text{d}t$, with
\begin{equation*}
\left\{\begin{array}{rl}
     L(v)\hspace{-3mm}&=(\log R)^*(v)=\sup_{\alpha \in {\bf R}^2}( \langle \alpha,v\rangle-\log R(\alpha)),\\    
     R(\alpha)\hspace{-3mm}&=\textstyle \sum_{-1\leq i,j\leq 1}p_{i,j}\exp(\langle \alpha,(i,j)\rangle).
\end{array}\right.     
\end{equation*} 
\end{itemize}
Obviously, the inequality \eqref{eq:less_powerful} still holds if one takes the infimum over all linear paths $\phi$, i.e., $\phi(t)=(t/T,0)$ for $t\in[0,T]$. For such functions $\phi$, we have 
\begin{equation*}
     \inf_\phi I_{[0,T]}(\phi) =  T\cdot (\log R )^*(1/T,0).
\end{equation*}
Then, since
\begin{equation*}
     \inf_{T>0} \{T\cdot(\log R )^*(1/T,0)\} =0
\end{equation*}
(see \cite[Page 35]{RO}), we reach the conclusion that \eqref{eq:tptptp} holds.
%Finally, we notice that for all $\delta>0$, there exists a positive constant $M$ such that for all $i\geq 1$, $G^{1,1}_{i,1}\geq M\exp(-\delta i)$. The latter follows from \cite[Theorem 1]{IR0}\footnote{A priori, it is not possible to use \cite[Theorem 1]{IR0} as such, as some hypothesis in the statement of the theorem is not fulfilled (namely, it is assumed that the rate function $I_T$ is positive at $(0,0)$, which is not the case for random walks with zero drift). However, it happens that this hypothesis is not necessary to derive the lower bound we need here (in fact, this assumption is necessary for the upper bound, that we do not use here).}, where large deviation techniques are used. 
The proof of Lemma \ref{lemma:radius_convergence_zero_drift} is completed.
\end{proof}

We shall prove in Corollary \ref{cor:exactly1} that for any (non-zero) harmonic function $f(i_0,j_0)$, one has $H(1,0)=H(0,1)=\infty$. Together with Lemma \ref{lemma:radius_convergence_zero_drift}, this entails that the radius of convergence of $H(x,0)$ and $H(0,y)$ is exactly one. As a consequence, the harmonic function $f(i_0,j_0)$ goes to infinity at a subexponential rate as $i_0,j_0$ go to infinity. 

The latter facts (namely, that there is only one possible exponential growth rate for the harmonic functions, and that this rate is one) is very specific to the zero drift case. For a random walk having a non-zero drift (and satisfying \ref{small_jumps} and \ref{non_degenerate}), there will exist infinitely many possible exponential growth rates for the harmonic functions. It is precisely this phenomenon which will eventually entail that the Martin boundary of a random walk with non-zero drift is composed of infinitely many harmonic functions \cite{IRL,KuRa}; see Section~\ref{subsec:SRWNZD} for more details.

\subsection{A second functional equation}
\label{ASFE}

Our aim now is to prove, starting from the first functional equation \eqref{eq:functional_equation}, that there exists a domain $\mathscr{G}_X$ (as on Figure \ref{fig:ex_curve}) in which the unknown function $H(x,0)$ satisfies a boundary value problem, as follows:

\unitlength=0.6cm
\begin{figure}[t]
\begin{center}
    \begin{picture}(0,5)
    \thicklines
    \put(-5,0){\vector(1,0){10}}
    \put(0,-5){\vector(0,1){10}}
       {\psarc[linecolor=red](2.1,0){-0.5}{-63}{0}}
    {\put(3.5,0){\textcolor{black}{\circle*{0.3}}}}
    \put(3.6,0.4){$1$}
    {\put(0.5,0){\textcolor{black}{\circle*{0.3}}}}
    \put(0.3,0.4){$x_1$}
    \put(2.3,0.4){\textcolor{red}{$\theta$}}
    \put(-3.3,0.4){\textcolor{blue}{$\mathscr{G}_X$}}
   \textcolor{blue}{\qbezier(3.5,0)(2,3.5)(0,3.5)
    \qbezier(3.5,0)(2,-3.5)(0,-3.5)
    \qbezier(0,3.5)(-3.5,3.5)(-3.5,0)
    \qbezier(-3.5,0)(-3.5,-3.5)(0,-3.5)}
     \end{picture}
\end{center}
\vspace{25mm}
\caption{The domain $\mathscr{G}_X$ is simply connected and symmetrical w.r.t.\ the horizontal axis. It is smooth everywhere except at $1$, where it may have a corner point}
\label{fig:ex_curve}
\end{figure}
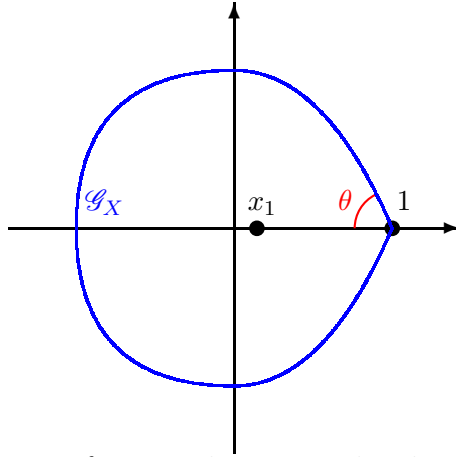

\begin{lem}
\label{lem:BPV}
 $H(x,0)$ satisfies the following boundary value problem {\rm(}below and throughout, $\overline{x}$ stands for the conjugate value of $x\in{\bf C}${\rm)}:
     \begin{enumerate}[label={\rm(\roman{*})},ref={\rm(\roman{*})}]
          \item\label{class_function} $H(x,0)$ is analytic in $\mathscr{G}_X$;
          \item\label{class_function_2} $H(x,0)$ is continuous on $\overline{\mathscr{G}_X}\setminus \{1\}$;
          \item\label{boundary_condition} For all $x$ on the boundary of $\mathscr{G}_X$ except at $1$,
          $
               L(x,0)H(x,0)-L(\overline{x},0)H(\overline{x},0)=0.
          $
     \end{enumerate}
\end{lem}     
The problem of finding functions satisfying \ref{class_function}, \ref{class_function_2} and \ref{boundary_condition} is a particular instance of a boundary value problem with shift (the complex conjugation plays in \ref{boundary_condition} the role of the shift), see \cite{LIT} for an extensive treatment of this topic. 

We postpone the proof of Lemma \ref{lem:BPV} to Section \ref{subsec:proof_lem:BPV}, since before we need to introduce some basic properties of the kernel $L(x,y)$, so as in particular to define properly the domain $\mathscr{G}_X$.

\subsection{Basic properties of the kernel}
\label{Notations}
This part is purely technical, and presents some properties of the kernel $L(x,y)$ introduced in \eqref{eq:def_L}, that are crucial for the proofs.  

\subsubsection*{Branch points}
The kernel $L(x,y)$ can also be written as\footnote{For readers used to \cite{FIM}, we notice that the notations we take here are not the usual ones, as the kernel $L(x,y)$ in \eqref{eq:def_L} is the reciprocal one of the kernel $Q(x,y)$ in \cite{FIM}: $L(x,y)=x^2y^2Q(1/x,1/y)$.}
     \begin{equation}
     \label{eq:alternative_definition_L}
          L(x,y) = \alpha (x) y^{2}+ \beta (x) y + \gamma (x) = \widetilde{\alpha }(y) x^{2}+
          \widetilde{\beta }(y) x + \widetilde{\gamma }(y),
     \end{equation}
where
%     \begin{equation}
%     \label{def_a_b_c}
%          \begin{array}{llllllllll}
%               \alpha (x) =& \hspace{-3mm}p_{-1,-1}x^{2}+& \hspace{-3mm}p_{0,-1}x+& \hspace{-3mm}p_{1,-1},
%               \  \beta(x) =& \hspace{-3mm}p_{-1,0}x^{2}-& \hspace{-3mm}x+& \hspace{-3mm}p_{1,0},
%               \ \gamma(x) =& \hspace{-3mm}p_{-1,1}x^{2}+& \hspace{-3mm}p_{0,1}x+& \hspace{-3mm}p_{1,1},\\
%               \widetilde{\alpha }(y) =& \hspace{-3mm} p_{-1,-1}y^{2}+& \hspace{-3mm}p_{-1,0}y+& \hspace{-3mm}p_{-1,1},
%               \  \widetilde{\beta}(y) =& \hspace{-3mm}p_{0,-1}y^2-& \hspace{-3mm}y+& \hspace{-3mm}p_{0,1},
%               \ \widetilde{\gamma}(y) =& \hspace{-3mm}p_{1,-1}y^{2}+& \hspace{-3mm}p_{1,0}y+& \hspace{-3mm}p_{1,1}.
%               \end{array}
%     \end{equation}  
          \begin{equation}
           \label{def_a_b_c}
          \left\{\begin{array}{rll}
\alpha (x)\hspace{-2mm} &=&\hspace{-2mm}  p_{-1,-1}x^{2}+ p_{0,-1}x+ p_{1,-1},\\
                \beta(x) \hspace{-2mm} &=&\hspace{-2mm} p_{-1,0}x^{2}- x+ p_{1,0},\\
               \gamma(x)\hspace{-2mm}  &=&\hspace{-2mm} p_{-1,1}x^{2}+ p_{0,1}x+ p_{1,1},\\
               \widetilde{\alpha }(y)\hspace{-2mm}  &=& \hspace{-2mm} p_{-1,-1}y^{2}+ p_{-1,0}y+ p_{-1,1},\\
               \widetilde{\beta}(y)\hspace{-2mm}  &=&\hspace{-2mm} p_{0,-1}y^2- y+ p_{0,1},\\
               \widetilde{\gamma}(y) \hspace{-2mm} &=&\hspace{-2mm} p_{1,-1}y^{2}+ p_{1,0}y+ p_{1,1}.\end{array}\right.
     \end{equation}
We also define
     \begin{equation}
     \label{def_d}
          \delta (x)=\beta (x)^{2}-4\alpha (x) \gamma(x),
          \qquad \widetilde{\delta }(y)=
          \widetilde{\beta }(y)^{2}-4
          \widetilde{\alpha }(y)\widetilde{\gamma }(y),
     \end{equation}
which are the discriminants of the polynomial $L(x,y)$ as a function of $y$ and $x$, respectively. The roots of these polynomials are called the branch points of the kernel. The following facts regarding the polynomial $\delta$ are proved in \cite[Chapter 2]{FIM}: Under \ref{small_jumps}, \ref{non_degenerate} and \ref{drift}, $\delta $ has degree three or four. Further, $1$ is a root of order two of $\delta$. We denote the roots of $\delta$ by  $\{x_\ell\}_{1\leq \ell\leq 4}$, with
     \begin{equation}
     \label{properties_branch_points}
          |x_1|\leq 1 \leq |x_4|,
          \qquad
          x_2 = x_3 = 1,
     \end{equation}
and $x_4=\infty$ if $\delta $ has degree three. We have $x_1\in[-1,1)$ and $x_4\in(1,\infty) \cup \{\infty\}\cup (-\infty,-1]$. Further, on ${\bf R}$, $\delta(x)$ is negative if and only if $x\in [x_1,x_4]\setminus \{1\}$. 
The polynomial $\widetilde \delta$ in \eqref{def_d} and its roots $\{y_\ell\}_{1\leq \ell\leq 4}$ satisfy similar properties, see their location on Figure \ref{fig:ex_branch_points}. 

\subsubsection*{Algebraic functions defined by the kernel}
In what follows, we denote by $X(y)$ and $Y(x)$ the algebraic functions defined by $L(X(y),y)=0$ and $L(x,Y(x))=0$. With \eqref{eq:alternative_definition_L} and \eqref{def_d} we have
     \begin{equation}
     \label{expression_X_Y}
          X(y)=\frac{-\widetilde \beta(y)\pm \widetilde \delta(y)^{1/2}}{2 \widetilde \alpha(y)},
          \qquad Y(x)=\frac{-\beta(x)\pm \delta(x)^{1/2}}{2 \alpha(x)}.
     \end{equation}
Functions $X(y)$ and $Y(x)$ both have two branches, called $X_0$, $X_1$ and $Y_0$, $Y_1$, which are meromorphic on ${\bf C}\setminus [y_1,y_4]$ and ${\bf C}\setminus [x_1,x_4]$, respectively, see Figure \ref{fig:ex_branch_points}. We fix the notations by requiring that on the whole of ${\bf C}$,
     \begin{equation*}
          |X_0(y)|\leq |X_1(y)|, \qquad |Y_0(x)|\leq |Y_1(x)|.
     \end{equation*}
     \unitlength=0.6cm
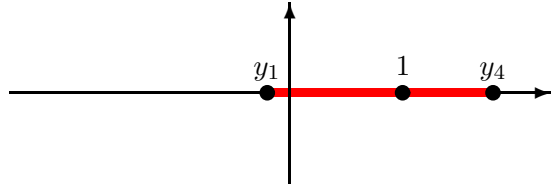
\begin{figure}[t]
\begin{center}
\begin{tabular}{ccccc}
    \begin{picture}(0,2)
    \thicklines
    \put(4.5,0){\vector(1,0){1.5}}
    \put(-6,0){\line(1,0){5.6}}
    \linethickness{1mm}
    \textcolor{red}{\put(-0.5,0){\line(1,0){5}}}
    \thicklines
    \put(0,-2){\vector(0,1){4}}
    \put(-.5,0){\textcolor{black}{\circle*{0.3}}}
    \put(2.5,0){\textcolor{black}{\circle*{0.3}}}
    \put(4.5,0){\textcolor{black}{\circle*{0.3}}}
    \put(-0.8,0.4){$y_1$}
    \put(2.35,0.4){$1$}
    \put(4.2,0.4){$y_4$}
     \end{picture}
    \end{tabular}
\end{center}
\vspace{10mm}
\caption{The functions $X_0(y)$ and $X_1(y)$ are meromorphic functions on the cut plane ${\bf C}\setminus [y_1,y_4]$}
\label{fig:ex_branch_points}
\end{figure}
     
\subsubsection*{On a curve defined by the complex values of function $X(y)$}

To conclude Section \ref{Notations},  we introduce the curve
     \begin{equation*}
          X([y_1,1])=X_0([y_1,1])\cup X_1([y_1,1]),
     \end{equation*}
which is symmetrical w.r.t.\ the real axis (since for $y\in [y_1,1]$, $X_0(y)$ and $X_1(y)$ are complex conjugates), and goes around the point $x_1$ (see \cite[Theorem 5.3.3]{FIM}). 

The curve $X([y_1,1])$ is smooth except at $X(1) = 1$, where it may have a corner point. More specifically, at $1$ the angle between the segment $[x_1,1]$ and the curve is given by \cite{FR2}
     \begin{equation}
     \label{exp_theta}
          \pm\theta=\pm\arccos \left(-\frac{\sum_{-1\leq i,j\leq 1}i j p_{i,j}}
          {\sqrt{(\sum_{-1\leq i,j\leq 1}i^2 p_{i,j})\cdot(\sum_{-1\leq i,j\leq 1}j^2 p_{i,j})}}\right).
     \end{equation}
We denote by $\mathscr{G}_X$ the open set bounded by the curve $X([y_1,1])$, which in addition contains the point $x_1$ (or equivalently $0$). See Figure \ref{fig:ex_curve} for an example of domain $\mathscr{G}_X$.

\subsubsection*{Example 1 (continued)} For instance, for the simple random walk, $X([y_1,1])$ is the unit circle (see below, or see \cite[Theorem 5.3.3]{FIM} for the original proof), hence $\mathscr{G}_X$ is the unit disc. In that case, $X([y_1,1])$ is smooth at $1$, which is coherent with $\theta = \arccos (0) =\pi/2$ in \eqref{exp_theta}. We now briefly explain how to prove that $\mathscr{G}_X$ is the unit disc. Take $y\in[y_1,1]$. Then $\widetilde\delta(y)\leq 0$, and thus $\widetilde\delta(y)^{1/2}$ is purely imaginary. Then
\begin{equation*}
     \left\vert \frac{-\widetilde \beta(y)\pm \widetilde \delta(y)^{1/2}}{2 \widetilde \alpha(y)}\right\vert^2 =\frac{\widetilde \beta(y)^2-\widetilde \delta(y)}{4\widetilde\alpha(y)^2 }=\frac{\widetilde\gamma(y)}{\widetilde\alpha(y)}=1,
\end{equation*}
where the last equality comes from the fact that for the simple random walk, $\widetilde\gamma(y)=\widetilde\alpha(y)$. We conclude that $X([y_1,1])$ is included in the unit circle. We have the equality as it follows from easy calculations that $X(1)=1$ and $X(y_1)=-1$.

\subsection{Proof of Lemma \ref{lem:BPV}}
\label{subsec:proof_lem:BPV}

Let $\mathscr{G}_X$ be the set introduced above, and let $\overline{\mathscr{G}_X}$ be its closure (in ${\bf C}\cup\{\infty\}$). This part aims at showing Lemma \ref{lem:BPV}.

\begin{proof}[Proof of {\rm \ref{class_function}}]
Note that from Lemma \ref{lemma:radius_convergence_zero_drift}, we already know that $H(x,0)$ is analytic within the open unit disc. Accordingly, if the domain $\mathscr{G}_X$ is included in the unit disc, there is nothing to prove. Note that like the simple random walk (see Section \ref{Notations}), many walks do satisfy this property. 

On the other hand, there also exist walks such that part of $\mathscr{G}_X$ lies outside the unit disc. This is for example the case for the (Gessel's) walk with $p_{1,1} = p_{1,0} = p_{-1,-1} = p_{-1,0} = 1/4$, see \cite{Ra}. In that case, \ref{class_function} follows from an analytic continuation argument based on  \eqref{eq:functional_equation}. This argument is exactly the same as that used in \cite[Section 4]{Ra}, and we refer to this work for the details.
\end{proof}

\begin{proof}[Proof of {\rm \ref{class_function_2}}]
We refer to \cite[Section 4]{Ra}, where a stronger statement (on the analytic continuation of the whole of ${\bf C}\setminus [1,x_4]$) is proved for a quite similar functional equation.
\end{proof}

\begin{proof}[Proof of {\rm \ref{boundary_condition}}]
Proving \ref{boundary_condition} is classical, see, e.g., \cite{FIM,KuRa,Ra}. However, we write down some details here, as this illustrates concrete and standard manipulations on functional equations of the type \eqref{eq:functional_equation} and \eqref{eq_fonc_general}. 

We first evaluate \eqref{eq:functional_equation} both at $X_0(y)$ and $X_1(y)$. At this point, the evaluation of the functional equation is just formal, since a priori this is valid only if $\vert X_0(y)\vert <1$, $\vert X_1(y)\vert <1$ and simultaneously $\vert y\vert <1$ (see Lemma \ref{lemma:radius_convergence_zero_drift}). This way, the kernel $L(x,y)$ vanishes, and the LHS of \eqref{eq:functional_equation} too. We thus have, for $i\in\{0,1\}$,
\begin{equation*}
     0= L(X_i(y),0)H(X_i(y),0)+L(0,y)H(0,y)-L(0,0)H(0,0).
\end{equation*}
Making the difference of the two above equations leads to
\begin{equation*}
     L(X_1(y),0)H(X_1(y),0)-L(X_0(y),0)H(X_0(y),0)=0.
\end{equation*}
The latter equation relates the function $L(x,0)H(x,0)$ evaluated at two different points. Choosing $y\in[y_1,1]$ (the fact that we can do this evaluation is not obvious, but it comes from similar arguments as in \ref{class_function_2}) and noticing that for such values of $y$, $X_0(y)$ and $X_1(y)$ are complex conjugate, we obtain that for all $x\in X([y_1,1])$ except at $1$,
\begin{equation*}
     L(x,0)H(x,0)-L(\overline{x},0)H(\overline{x},0)=0,
\end{equation*}
which is nothing else but \ref{boundary_condition}.
\end{proof}

\subsection{A key lemma} 
We conclude Section \ref{explicit} by stating a lemma (all-present in Section \ref{solving}) on a certain boundary value problem close to that of Section \ref{ASFE}.

\begin{lem}
\label{lem_constant}
Let $h(x)$ be a function satisfying {\rm\ref{class_function}}, {\rm\ref{class_function_2_tilde}} and {\rm\ref{boundary_condition}}, where
\begin{enumerate}[label={\rm($\widetilde{\text{\roman{*}}})$},ref={($\widetilde{\text{\roman{*}}}$)}]
\setcounter{enumi}{1}
     \item \label{class_function_2_tilde} $h(x)$ is continuous on $\overline{\mathscr{G}_X}$.
\end{enumerate}
Then $h(x)$ necessarily is a constant function.
\end{lem}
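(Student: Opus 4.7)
The plan is to introduce the auxiliary holomorphic combination $\phi(x) := L(x,0)h(x) = \gamma(x)h(x)$ and to argue in two stages: first that $\phi$ is constant on $\mathscr{G}_X$, then that $h$ itself is constant.

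For the first stage, I would observe that $\phi$ is holomorphic in $\mathscr{G}_X$ by \ref{class_function} and continuous on $\overline{\mathscr{G}_X}$ by \ref{class_function_2_tilde}, since $\gamma$ is a polynomial. The boundary condition \ref{boundary_condition} rewrites as $\phi(x) = \phi(\bar x)$ on $\partial\mathscr{G}_X \setminus \{1\}$, and this equality also holds at $1$ trivially. To exploit this symmetry, I would introduce the reflected function $\phi^{\ast}(x) := \overline{\phi(\bar x)}$, which is holomorphic in $\mathscr{G}_X$ (this domain being symmetric w.r.t.\ the real axis) and continuous on $\overline{\mathscr{G}_X}$. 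On the boundary, the equality $\phi(x) = \phi(\bar x)$ rewrites as $\phi^{\ast}(x) = \overline{\phi(x)}$, so that both $\phi + \phi^{\ast}$ and $i(\phi - \phi^{\ast})$ are holomorphic in $\mathscr{G}_X$, continuous on $\overline{\mathscr{G}_X}$, and real-valued on $\partial\mathscr{G}_X$. A standard application of the maximum principle to their (harmonic, boundary-vanishing) imaginary parts forces each to be constant in $\mathscr{G}_X$; combining the two, $\phi \equiv C$ for some constant $C$, i.e., $\gamma(x)h(x) \equiv C$.

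For the second stage, I would distinguish two cases. If $\gamma$ is itself a constant polynomial (which happens exactly when $p_{-1,1} = p_{0,1} = 0$), the conclusion is immediate with $h \equiv C/\gamma$. Otherwise $\gamma$ has at least one finite zero; its (at most two) zeros are precisely the values $X_0(0), X_1(0)$ of the branches of the algebraic function defined by $L(x,0) = 0$, and using the description of $\mathscr{G}_X$ recalled in Section \ref{Notations}---the region bounded by $X([y_1,1])$ and containing $x_1$---one checks that at least one such zero $x_0$ lies in $\overline{\mathscr{G}_X}$. Evaluating $\gamma h \equiv C$ at $x_0$ and invoking the continuity of $h$ yields $C = 0$; then $\gamma h \equiv 0$ on $\mathscr{G}_X$ promotes to $h \equiv 0$ by continuity off the isolated zeros of $\gamma$.

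I expect the cleanest part of the argument to be the first stage---a Schwarz-reflection style application of the maximum principle running through the companion $\phi^{\ast}$. The delicate point, and the main obstacle, is the second stage: the boundary value problem alone only determines the product $\gamma h$, not $h$, and the passage to ``$h$ is constant'' relies on the geometric fact that the zeros of $\gamma$ fall inside $\overline{\mathscr{G}_X}$---a structural feature stemming from the hypotheses \ref{small_jumps}--\ref{drift} via the shape of the curve $X([y_1,1])$.
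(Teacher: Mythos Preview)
The paper gives no argument of its own here; it simply cites \cite{FIM} and \cite{LIT}. Your Stage~1---introducing the reflected companion $\phi^\ast(x)=\overline{\phi(\bar x)}$ and applying the maximum principle to the imaginary parts of $\phi+\phi^\ast$ and $i(\phi-\phi^\ast)$ on the bounded, real-symmetric domain $\mathscr{G}_X$---is precisely the standard device one finds in those references, and it is correct.

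Your Stage~2, however, stems from reading \ref{boundary_condition} for a generic $h$ as $L(x,0)h(x)=L(\bar x,0)h(\bar x)$. This is a defensible reading of the statement, but note that in the principal applications of the lemma in Section~\ref{subsec:1} (to $\mathcal H$ and to $\mathcal H-C$) the functions at hand satisfy the \emph{bare} relation $g(x)=g(\bar x)$ on $\partial\mathscr{G}_X$, and \emph{not} the version with the $L(x,0)$ factors: since $\gamma(x)$ is generally not real on $X([y_1,1])$, one has $L(x,0)\mathcal H(x)\neq L(\bar x,0)\mathcal H(\bar x)$ there. For that simpler boundary relation your Stage~1, run directly on $h$ rather than on $\gamma h$, already yields constancy, and Stage~2 becomes superfluous.

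Under your literal reading, Stage~2 correctly isolates the crux: one must know that $\gamma$ vanishes somewhere in $\overline{\mathscr{G}_X}$, for otherwise $h=C/\gamma$ would be a non-constant solution. The assertion $X_0(0)\in\overline{\mathscr{G}_X}$ is true---the paper itself relies on it implicitly when writing $\nu=-w(X_0(0))$ in \eqref{eq:mu-nu}---but ``one checks'' is not a proof. One route is the mapping property (established in \cite{FIM}) that $X_0$ carries $\overline{\mathscr{G}_Y}$, which contains $0$, into $\overline{\mathscr{G}_X}$; a cheap special case is that when $p_{1,1}=0$ the point $0$ is itself a root of $\gamma$ and lies in $\mathscr{G}_X$ by the description in Section~\ref{Notations}. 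So your outline is sound, but the second stage needs this geometric input made explicit to be complete.
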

\begin{proof}
The proof of this result is contained in both \cite{FIM} and \cite{LIT}.
\end{proof}

An immediate consequence of Lemma \ref{lem:BPV} and Lemma \ref{lem_constant} is the following:
\begin{cor}
\label{cor:exactly1}
For any non-zero harmonic function, $H(1,0)=H(0,1)=\infty$.
\end{cor}
Indeed, if for instance $H(1,0)$ were finite, then the function $H(x,0)$ would satisfy the boundary value problem {\rm\ref{class_function}}, {\rm\ref{class_function_2_tilde}} and {\rm\ref{boundary_condition}}, and thus $H(x,0)$ would be a constant function. This would imply that $f(i_0,j_0)$ is the harmonic function identically equal to zero. Further, together with Lemma \ref{lemma:radius_convergence_zero_drift}, Corollary \ref{cor:exactly1} entails that the radius of convergence of $H(x,0)$ and $H(0,y)$ is exactly one.

\section{Solving the functional equations}
\setcounter{equation}{0}
\label{solving}

In this section we characterize the harmonic functions satisfying \ref{property_harmonicity}, \ref{property_zero} and \ref{property_positive}. Precisely, we shall prove the following result: define
     \begin{equation}
     \label{def_CGF_u}
          w(x)=\sin\left(\frac{\pi}{\theta}
          \left[\arcsin(T(x))-\frac{\pi}{2}\right]\right)^2,
     \end{equation}
with $\theta$ as in \eqref{exp_theta}, and with    
     \begin{equation}
     \label{def_f}
          T(x) = \frac{1}{\sqrt{\frac{1}{3}-\frac{2f(x)}{\delta''(1)}}},\qquad f(x)=\left\{\begin{array}{lll}
          \displaystyle \frac{\delta''(x_{4})}{6}+\frac{\delta'(x_{4})}{x-x_{4}}& \text{if} & x_{4}\neq \infty,\vspace{1.5mm}\\
          \displaystyle \frac{\delta''(0)}{6}+\frac{\delta'''(0)x}{6} & \text{if} & x_{4}=\infty.\end{array}\right.
     \end{equation}
Subtracting by $w(0)$, we can assume that $w(0)=0$. Define further
\begin{equation}
\label{eq:mu-nu}
     \nu =-w(X_0(0)),
     \qquad
     \mu =f(1,1)\times \left\{\begin{array}{lll}
     \displaystyle \frac{2p_{-1,1}}{w''(0)}&\text{if}& p_{1,1}=0 \text{ and } p_{0,1}= 0,\vspace{1.5mm}\\
     \displaystyle \frac{p_{0,1}}{w'(0)}&\text{if}& p_{1,1} = 0 \text{ and } p_{0,1}\neq 0,\vspace{1.5mm}\\
     \displaystyle-\frac{p_{1,1}}{w(X_0(0))} &\text{if}& p_{1,1}\neq 0.
     \end{array}\right.
\end{equation}
\begin{thm}
\label{thm:main-harmonic}
Define $w(x)$ as in \eqref{def_CGF_u}, and $\mu$ and $\nu$ as in \eqref{eq:mu-nu}. We have
\begin{equation}
\label{eq:l}
     H(x,0) = \mu\frac{w(x)+\nu}{L(x,0)}.
\end{equation}
\end{thm}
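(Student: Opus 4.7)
My plan is to show that the candidate function
\[
\psi(x) := \mu\,\frac{w(x)+\nu}{L(x,0)}
\]
satisfies the same boundary value problem (\ref{class_function})--(\ref{boundary_condition}) of Section \ref{ASFE} as $H(x,0)$, and then to conclude that $H(x,0) = \psi(x)$ by applying Lemma \ref{lem_constant} to the difference $D(x) := H(x,0) - \psi(x)$.

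First I verify that $\psi$ satisfies (\ref{class_function}), (\ref{class_function_2}) and (\ref{boundary_condition}). Analyticity in $\mathscr{G}_X$ follows from (W1), provided one checks that the zeros of $L(x,0) = p_{-1,1}x^2+p_{0,1}x+p_{1,1}$ lying in $\mathscr{G}_X$ are cancelled by zeros of $w(x)+\nu$. These zeros are the two branches $X_0(0)$ and $X_1(0)$ of the algebraic function $X$ at $y=0$; the definition $\nu := -w(X_0(0))$ is tailored precisely so that $w(X_0(0))+\nu = 0$, removing the potential pole at $X_0(0)$, while $X_1(0)$ lies outside of $\mathscr{G}_X$ by standard geometric facts about the kernel (cf.\ \cite[Chapter 5]{FIM}). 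Property (\ref{class_function_2}) comes from (W2). For (\ref{boundary_condition}), (W5) gives
\[
L(x,0)\psi(x) - L(\overline{x},0)\psi(\overline{x}) = \mu\bigl[w(x) - w(\overline{x})\bigr] = 0, \qquad x \in \partial\mathscr{G}_X.
\]

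By linearity, $D$ inherits (\ref{class_function}), (\ref{class_function_2}) and (\ref{boundary_condition}) from $H$ and $\psi$. The main step---and the main obstacle---is to promote (\ref{class_function_2}) to (\ref{class_function_2_tilde}), i.e., to check that $D$ extends continuously to $x = 1$. By (W4), $\psi(x) \sim \mu c /[L(1,0)(1-x)^{\pi/\theta}]$ as $x \to 1$, while Lemma \ref{lemma:radius_convergence_zero_drift} gives $H(1,0) = +\infty$, so one must verify that these singular parts cancel. This reduces to a sharp a priori growth estimate on $f(i,1)$ of the form $C\, i^{\pi/\theta - 1}$ as $i \to \infty$, available for zero-drift random walks in a quadrant in the spirit of \cite{DW}. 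Once (\ref{class_function_2_tilde}) is in hand, Lemma \ref{lem_constant} yields that $D$ is identically equal to some constant $c$.

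It remains to pin down $c$ by evaluation at $x = 0$. When $p_{1,1} \neq 0$ we have $L(0,0) = p_{1,1}$ and $w(0) = 0$, so $\psi(0) = \mu\nu/p_{1,1}$, which equals $f(1,1) = H(0,0)$ precisely by the definition of $\mu$ in \eqref{eq:mu-nu}. When $p_{1,1} = 0$, the relation $X_0(0) = 0$ forces $\nu = 0$; expanding $L(x,0) = p_{0,1}x + O(x^2)$ and $w(x) = w'(0)x + O(x^2)$ near the origin gives $\psi(0) = \lim_{x\to 0}\mu w(x)/L(x,0) = \mu w'(0)/p_{0,1} = f(1,1) = H(0,0)$, again by \eqref{eq:mu-nu}. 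In both cases $c = D(0) = 0$, hence $D \equiv 0$ and \eqref{eq:l} follows.
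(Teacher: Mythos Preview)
Your approach is genuinely different from the paper's, and the difference matters. The paper does \emph{not} guess the answer $\psi$ and subtract; instead it introduces
\[
\mathcal H(x)=\frac{1}{L(x,0)H(x,0)-z_0}\qquad(z_0\notin\mathbf R),
\]
which is automatically continuous at $x=1$ (it vanishes there, by Lemma~\ref{lemma:radius_convergence_zero_drift} alone), then subtracts off the finitely many poles of $\mathcal H$ inside $\mathscr G_X$ using principal parts built from $1/(w(x)-w(x_\ell))$, applies Lemma~\ref{lem_constant} to the remainder, and finally argues algebraically that the resulting rational function of $w$ must collapse to an affine function. The whole point of passing to the reciprocal is to manufacture continuity at $1$ for free.

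Your argument, by contrast, hinges on showing that $D(x)=H(x,0)-\psi(x)$ extends continuously to $x=1$, and this is where it breaks. Knowing $f(i,1)\sim C\,i^{\pi/\theta-1}$ would give $H(x,0)\sim C'/(1-x)^{\pi/\theta}$ for some $C'$, but for $D$ to stay bounded you need the leading coefficients to match \emph{exactly}: $C'=\mu c/L(1,0)$. Since $\mu$ is defined through $f(1,1)$, this amounts to an a priori identity linking the boundary value $f(1,1)$ to the constant $C$ governing the growth of $f(i,1)$ along the axis --- essentially the content of the theorem you are trying to prove. Moreover, even exact cancellation of the leading singularity does not guarantee continuity: the next-order terms could still diverge. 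The reference to \cite{DW} does not close this gap; the identification of the \cite{DW} harmonic function with $f$ (Corollary~\ref{cor:exit-time}) is a \emph{consequence} of Theorem~\ref{thm:main-harmonic} in this paper, so invoking it here is circular.

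In short, the verification that $\psi$ solves \ref{class_function}--\ref{boundary_condition} and the computation of $D(0)$ are fine, but the step promoting \ref{class_function_2} to \ref{class_function_2_tilde} for $D$ is a genuine gap that your proposal does not fill. The paper's reciprocal trick is precisely what circumvents this difficulty.
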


Of course, a similar expression could be written for $H(0,y)$, after a suitable change of the parameters (namely, $p_{i,j}\to p_{j,i}$). As for $H(0,0)$, it is simply equal to $f(1,1)$, see \eqref{eq:generating_functions_harmonic_functions}, and an expression for the complete generating function $H(x,y)$ then follows from \eqref{eq:functional_equation}. Finally, the Taylor coefficients $f(i_0,j_0)$ of $H(x,y)$ can be obtained via Cauchy's formul\ae, see \eqref{eq:Cauchy}.  

The starting point for proving Theorem \ref{thm:main-harmonic} is the boundary value problem that satisfies the function $H(x,0)$ (see Section \ref{ASFE}). In Section \ref{subsec:1}, we show that $H(x,0)$ is necessarily of the form $S(w(x))/L(x,0)$, where $S$ is a rational function\footnote{We would like to insist on the fact that the function $S(w(x))/L(x,0)$ will not be the generating function associated with a harmonic function for any rational function $S$; however, it will solve the problem \ref{class_function}, \ref{class_function_2} and \ref{boundary_condition} for any $S$.}. In Section \ref{subsec:2}, we prove that $S(X)$ is actually a polynomial of the first order, and in Section \ref{subsec:3} we find the expression of this polynomial (i.e., the constants $\mu$ and $\nu$ in \eqref{eq:l}). In Section \ref{mapping} we compute $w(x)$ in terms of sine and arcsine functions.

\subsection{Expression of the solutions to the boundary value problem}
\label{subsec:1}

For a given non-real $z_0$ and a non-zero harmonic function $f(i_0,j_0)$, consider the function
\begin{equation*}
     \mathcal H(x) = \frac{1}{L(x,0)H(x,0) - z_0}.
\end{equation*}
We first notice that for $x$ on $X([y_1,1])$ (the boundary of $\mathscr G_X$), $\mathcal H(x)\neq \infty$. Indeed, remember (see in particular Section \ref{ASFE}) that $L(x,0)H(x,0)$ takes real values on $X([y_1,1])$ and that $z_0$ is non-real. Further, let us denote by $N_p$ the number of poles that $\mathcal H(x)$ has within $\mathscr G_X$. First, by the principle of isolated singularities \cite[Section 3.2]{SG}, we obtain that $N_p<\infty$ (for any non-real $z_0$). Second:
\begin{lem}
\label{lem:N_p-non-zero}
     For any non-real $z_0$, we have $N_p\geq 1$.
\end{lem}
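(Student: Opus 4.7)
The plan is to argue by contradiction: assume there is a non-real $z_0$ with $N_p=0$, i.e., the analytic function $\varphi(x):=L(x,0)H(x,0)$ never takes the value $z_0$ in $\mathscr{G}_X$, and derive $z_0=\bar z_0$. First I would exploit Schwarz reflection: since $L(x,0)=\gamma(x)$ is a polynomial with real coefficients and $H(x,0)=\sum_{i_0\geq1}f(i_0,1)x^{i_0-1}$ has nonnegative real coefficients, $\varphi$ has real Taylor coefficients, and combined with the symmetry of $\mathscr{G}_X$ with respect to the real axis (noted in Section~\ref{Notations}), this gives $\varphi(\bar x)=\overline{\varphi(x)}$ on $\mathscr{G}_X$. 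Consequently, if $\varphi(x_0)=\bar z_0$ at some $x_0\in\mathscr{G}_X$, then $\varphi(\bar x_0)=z_0$, contradicting our standing assumption. So $\varphi$ avoids both $z_0$ and $\bar z_0$ on $\mathscr{G}_X$.

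Next, I would introduce the auxiliary function
\[
h(x)=\frac{\varphi(x)-z_0}{\varphi(x)-\bar z_0},
\]
which is analytic and nowhere zero on $\mathscr{G}_X$ by the previous step. On $\partial\mathscr{G}_X\setminus\{1\}$, the boundary condition \ref{boundary_condition} combined with Schwarz reflection forces $\varphi$ to be real-valued, which yields $|h|\equiv1$ there (using $|r-z_0|=|r-\bar z_0|$ for $r\in{\bf R}$). At the singular boundary point $x=1$, Lemma~\ref{lemma:radius_convergence_zero_drift} together with the monotone convergence of the power series $H(x,0)$ on $[0,1)$ shows $H(x,0)\to+\infty$, while \ref{non_degenerate} (applied to the three cyclically consecutive entries $p_{-1,1},p_{0,1},p_{1,1}$) guarantees $L(1,0)>0$; combined with property \ref{class_function_2}, these facts imply $|\varphi(x)|\to\infty$ as $x\to1$ within $\overline{\mathscr{G}_X}$, so $h(x)\to1$. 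Thus $h$ extends continuously to $\overline{\mathscr{G}_X}$ with $h(1)=1$ and $|h|\equiv1$ on $\partial\mathscr{G}_X$.

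Finally, the maximum modulus principle applied to $h$ and to $1/h$ (both analytic on $\mathscr{G}_X$, continuous on $\overline{\mathscr{G}_X}$, of modulus $1$ on the whole boundary) yields $|h|\leq1$ and $|h|\geq1$, hence $|h|\equiv1$ on $\mathscr{G}_X$; so $h$ is a unimodular constant, and the value $h(1)=1$ pins it down to $h\equiv1$, which rewrites as $z_0=\bar z_0$, the desired contradiction. The main obstacle I anticipate lies in the middle step: rigorously establishing that $|\varphi(x)|\to\infty$ as $x\to1$ from within $\overline{\mathscr{G}_X}$, not merely along the real segment $[0,1)$ where monotone convergence suffices; the full statement will blend Lemma~\ref{lemma:radius_convergence_zero_drift}, the boundary continuity \ref{class_function_2}, and the analytic-continuation arguments underlying \ref{class_function}.
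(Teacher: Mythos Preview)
Your proof is correct but follows a genuinely different path from the paper's. The paper works directly with $\mathcal H(x)=1/(L(x,0)H(x,0)-z_0)$: under the assumption $N_p=0$ this function satisfies \ref{class_function}, \ref{class_function_2_tilde} and \ref{boundary_condition}, so Lemma~\ref{lem_constant} forces $\mathcal H\equiv C$, whence $H(x,0)=(z_0+1/C)/L(x,0)$. The contradiction is then obtained not from $z_0\notin{\bf R}$ but from the \emph{positivity} hypothesis \ref{property_positive}: the function $1/L(x,0)$ is either not a formal power series (when $L(0,0)=0$) or has some negative Taylor coefficients (when $L(0,0)\neq0$), so the constant $z_0+1/C$ must vanish, giving $H(x,0)\equiv0$ and hence $f\equiv0$.

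Your route, by contrast, exploits the Schwarz symmetry $\varphi(\bar x)=\overline{\varphi(x)}$ to pair $z_0$ with $\bar z_0$, builds the M\"obius quotient $h=(\varphi-z_0)/(\varphi-\bar z_0)$, and applies the maximum modulus principle directly. This is more self-contained (you essentially reprove the relevant special case of Lemma~\ref{lem_constant} rather than invoking it as a black box from \cite{FIM,LIT}) and, notably, it never appeals to the positivity of $f$: the contradiction comes purely from $z_0\notin{\bf R}$. On the other hand, the paper's argument is shorter once Lemma~\ref{lem_constant} is available, and it does not require isolating the limiting value $h(1)=1$---only that $\mathcal H$ be continuous at $1$, which needs the same input $|\varphi(x)|\to\infty$ that you flag as the delicate point. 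Both arguments therefore lean on Lemma~\ref{lemma:radius_convergence_zero_drift} and the continuity statement \ref{class_function_2} in the same essential way.
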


\begin{proof}
Assume that $N_p = 0$. Then $\mathcal H(x)$ satisfies {\rm\ref{class_function}}, {\rm\ref{class_function_2_tilde}} and {\rm\ref{boundary_condition}}, so that by Lemma \ref{lem_constant}, we conclude that $\mathcal H(x)$ is a constant function $C$. Accordingly, $H(x,0)=(z_0+1/C)/L(x,0)$. %Note that $z_0+1/C\neq 0$, as we know \cite{DW} that there exists at least one non-zero harmonic function. 
Having $z_0+1/C\neq 0$ is impossible, as $1/L(x,0)$ is either not a power series (if $L(0,0)=0$), or is a power series whose some of the coefficients are negative (if $L(0,0)\neq 0$). It follows that $z_0+1/C = 0$, and thus that $H(x,0)$ is identically zero. Accordingly, $f(i_0,j_0)=0$ for any $i_0,j_0\geq 0$, which is a contradiction with our starting assumption.
\end{proof}

Though this is not crucial for our analysis, it is interesting to note that we can prove, purely analytically, that $N_p$ is intrinsic, in the sense that it does not depend on $z_0$:
\begin{lem}
\label{lem:N_p-constant}
     $N_p$ is independent of the choice of $z_0\in{\bf C}\setminus {\bf R}$.
\end{lem}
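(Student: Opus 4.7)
The plan is to apply the argument principle to $\phi(x) := L(x,0)H(x,0)$ on $\mathscr G_X$. Since $L(x,0)$ is a polynomial, $N_p(z_0)$ coincides with the number of zeros of $\phi(x) - z_0$ in $\mathscr G_X$ counted with multiplicity: if $L(x^{\ast},0) = 0$ for some $x^{\ast} \in \mathscr G_X$, then $\phi(x^{\ast}) = 0 \neq z_0$, so $x^{\ast}$ is not a pole of $\mathcal H$. By \ref{boundary_condition}, $\phi$ takes real values on $\partial\mathscr G_X \setminus \{1\}$, and since the $p_{i,j}$ and $f(i_0,j_0)$ are real, $\phi$ has real Taylor coefficients, giving $\phi(\overline x) = \overline{\phi(x)}$.

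The first step is to show that $N_p$ is locally constant on $\mathbf C \setminus \mathbf R$. By the continuity of zeros under small analytic perturbations (Hurwitz/Rouch\'e), the count can only change if a zero of $\phi - z_0'$ enters or leaves $\mathscr G_X$ across $\partial \mathscr G_X$ as $z_0'$ varies near $z_0 \in \mathbf C \setminus \mathbf R$. No zero can cross $\partial\mathscr G_X \setminus \{1\}$, because $\phi$ is real there while $z_0'$ is not, so $\phi - z_0'$ stays bounded away from $0$ on compact subsets of that boundary. Zeros also cannot accumulate at $x = 1$: since $H(x,0)$ has non-negative Taylor coefficients with $H(1,0) = \infty$ by Lemma \ref{lemma:radius_convergence_zero_drift}, an Abelian argument yields $|H(x,0)| \to \infty$ as $x \to 1$ from within the unit disk, and the analytic continuation afforded by \eqref{eq:functional_equation} extends this blow-up to all of $\mathscr G_X$ (including the parts lying outside the unit disk, as for Gessel's walk). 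Since $L(1,0) = p_{-1,1}+p_{0,1}+p_{1,1} > 0$ under \ref{non_degenerate}, we obtain $|\phi(x)| \to \infty$ as $x \to 1$ in $\mathscr G_X$. Thus the cluster set of $\phi$ at $1$ is $\{\infty\}$ and does not meet $\mathbf C \setminus \mathbf R$, ensuring that $N_p$ is locally constant on each connected component of $\mathbf C \setminus \mathbf R$.

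The second step is to merge the two half-planes. The conjugation symmetry does this: $\mathscr G_X$ is symmetric with respect to the real axis (its boundary $X([y_1,1])$ being so, see Section \ref{Notations}), and $\phi(\overline x) = \overline{\phi(x)}$, so $x \mapsto \overline x$ defines a multiplicity-preserving bijection between $\{x\in\mathscr G_X : \phi(x) = z_0\}$ and $\{x\in\mathscr G_X : \phi(x) = \overline{z_0}\}$. Hence $N_p(z_0) = N_p(\overline{z_0})$, and the constant values on the upper and lower half-planes must coincide.

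The main obstacle is the analysis of $\phi$ near the boundary singularity $x = 1$: one must rigorously ensure that the cluster set of $\phi$ at $1$ does not intersect $\mathbf C \setminus \mathbf R$, so that zeros of $\phi - z_0'$ cannot appear or disappear near $1$ as $z_0'$ varies. An alternative to the Abelian blow-up argument would be to indent the contour around $1$, apply the argument principle on $\mathscr G_X \setminus B(1,\varepsilon)$, and control the small-arc contribution as $\varepsilon \to 0$; however, this requires the same control on the behavior of $\phi$ near $1$, so the boundary analysis at $x = 1$ appears unavoidable.
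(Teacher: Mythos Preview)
Your proof is correct and follows essentially the same route as the paper: both arguments reduce to counting the zeros of $\phi(x)-z_0$ in $\mathscr G_X$ via the argument principle, use continuity in $z_0$ (integer-valuedness) to get constancy on each half-plane, and then invoke the reality of the coefficients of $L(x,0)H(x,0)$ to match the two half-planes. The paper packages the first step as the contour integral \eqref{eq:number-poles} rather than Hurwitz/Rouch\'e, and handles the singular point $x=1$ via a brief footnote rather than your more explicit blow-up discussion, but the underlying idea is identical.
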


\begin{proof}
The number of poles $N_p$ can be calculated as follows (see \cite[Section 3.8.2]{SG}): if the contour $X([y_1,1])$ is traversed in the counter-clockwise sense, and if $N_z$ (resp.\ $N_p$) is the number of zeros (resp.\ poles) of $\mathcal H(x)$ inside $X([y_1,1])$ (they are counted as many times as their order indicates), we have\footnote{A priori, this Cauchy's theorem cannot be applied as such, as the contour $X([y_1,1])$ is not included in a domain where $\mathcal H(x)$ is analytic. However, since $\mathcal H(x)$ can be analytically continued through any point of $X([y_1,1])\setminus \{1\}$ (see \cite[Section 4]{Ra} for a similar statement), and since $\mathcal H(x)\to\infty$ as $x\to 1$, this result still holds.}
\begin{equation}
\label{eq:number-poles}
     \frac{1}{2\pi i}\int_{X([y_1,1])}\frac{\mathcal H'(x)}{\mathcal H(x)}\text{d} x = N_z-N_p = -N_p,
\end{equation}
since $N_z = 0$ (indeed, $H(x,0)\neq \infty$ for $x\in\mathscr G_X$). By a classical reasoning ($N_p$ takes only integer values and is continuous w.r.t.\ $z_0$ in the positive (or negative) half plane), we obtain that $N_p$ is constant for $z_0$ in a given half plane. Since the coefficients of $L(x,0)H(x,0)$ are real, we reach the conclusion that $N_p$ takes the same constant value for $z_0$ in the two half planes, and the proof of Lemma \ref{lem:N_p-constant} is completed.
\end{proof}

We now construct a function $C(x)$ such that $\mathcal H(x)-C(x)$ satisfies {\rm\ref{class_function}}, {\rm\ref{class_function_2_tilde}} and {\rm\ref{boundary_condition}}. With Lemma \ref{lem_constant}, it will then be obvious to deduce that $\mathcal H(x)-C(x)$ is a constant function. Constructing, or even showing the existence of such a function $C(x)$ is not easy, as it has to satisfy many properties: it must be meromorphic within the domain $\mathscr G_X$ (point {\rm\ref{class_function}}), it has to compensate for the poles of $\mathcal H(x)$ without adding any other poles (point {\rm\ref{class_function_2_tilde}}), and simultaneously it has to satisfy $C(\overline{x})=C(x)$ on the boundary of $\mathscr G_X$ (point {\rm\ref{boundary_condition}}). 

We first focus on the point {\rm\ref{class_function_2_tilde}}, and we explain how to cancel the singularities of $\mathcal H(x)$. Let us call $x_1,\ldots,x_{q}$ the poles of $\mathcal H(x)$ in $\mathscr G_X$, with respective multiplicities $n_1,\ldots,n_q$, so that $n_1 + \cdots + n_q = N_p$. Let $\ell\in\{1,\ldots ,q\}$ be fixed. Since the function ${1}/({x - x_\ell})$ has a pole of order $1$ at $x_\ell$ and is analytic elsewhere, we can find a polynomial $P_\ell$ of order $n_\ell$ such that 
\begin{equation*}
     \mathcal H(x) - P_\ell\left(\frac{1}{x -x_\ell}\right)
\end{equation*}
is analytic at $x_\ell$. Accordingly, the function
\begin{equation*}
     \mathcal H(x) - \textstyle \sum_{\ell = 1}^q \displaystyle P_\ell\left(\frac{1}{x - x_\ell}\right)
\end{equation*}
satisfies {\rm\ref{class_function}} and {\rm\ref{class_function_2_tilde}}. However, there is no reason that it takes the same value at $x$ and $\overline{x}$ for $x$ on the boundary (condition {\rm\ref{boundary_condition}}). 

We can adapt the argument above so as to satisfy condition {\rm\ref{boundary_condition}}. Specifically, we shall construct a function $w(x)$ such that eventually, the function
\begin{equation}
\label{eq:important_function}
     \mathcal H(x) - \textstyle \sum_{\ell = 1}^q \displaystyle P_\ell\left(\frac{1}{w(x) - w(x_\ell)}\right)
\end{equation}
will satisfy {\rm\ref{class_function}}, {\rm\ref{class_function_2_tilde}} and {\rm\ref{boundary_condition}}. If it exists, the function $w$ has to satisfy many properties. We first ask that
\begin{enumerate}[label=(W\arabic{*}),ref={\rm (W\arabic{*})}]
\item\label{W-boundary} For all $x$ on the boundary of $\mathscr{G}_X$ where $w(x)$ is (defined and) finite, $w(x)=w(\overline{x})$.
\end{enumerate}
As a consequence of \ref{W-boundary}, the function \eqref{eq:important_function} satisfies {\rm\ref{boundary_condition}} (at least formally, since at this point we do not know that $w(x)$ is finite at any point of the boundary of $\mathscr{G}_X$). In order to obtain {\rm\ref{class_function}}, the above construction (with the function $1/(x-x_\ell)$ instead of $1/(w(x)-w(x_\ell))$) and Equation \eqref{eq:important_function} suggest to impose that the pole that ${1}/({w(x) - w(x_\ell)})$ has at any point $x_\ell$ is of order $1$. We shall therefore ask that
\begin{enumerate}[label=(W\arabic{*}),ref={\rm (W\arabic{*})}]
\setcounter{enumi}{1}
     \item\label{W-conformal} $w$ is injective in $\mathscr{G}_X$;
     \item\label{W-analytic} $w$ is analytic on $\mathscr{G}_X$.
\end{enumerate}
The function $w$ cannot be continuous on $\overline{\mathscr{G}_X}$, for else it would be constant, see Lemma \ref{lem_constant}. It thus takes the value $\infty$ at a point on the boundary of ${\mathscr{G}_X}$. Moreover, it cannot take the value $\infty$ twice, otherwise it would not be injective. Considering instead of $w$ the function $1/(w(x)-w(1))$, we may assume that $w(1)=\infty$. It follows that
\begin{enumerate}[label=(W\arabic{*}),ref={\rm (W\arabic{*})}]
\setcounter{enumi}{3}
\item\label{W-continuous} $w$ is continuous on $\overline{\mathscr{G}_X}\setminus \{1\}$ and $w(1)=\infty$.
\end{enumerate}
The existence of $w$ satisfying \ref{W-boundary}--\ref{W-continuous} follows from general considerations on conformal mappings, see \cite[Chapter 2]{LIT}. We can further assume that $w(x)$ is real for $x$ on the boundary of ${\mathscr{G}_X}$. It is shown in \cite{FR2} that $w(x)$ is the unique function for which \ref{W-boundary}--\ref{W-continuous} hold (up to additive and multiplicative constants). We shall give further properties of the function $w(x)$ in Section \ref{mapping}. In particular, we shall prove that unexpectedly, $w(x)$ can be made explicit in terms of sine and arcsine functions. 

It comes from \eqref{eq:important_function} and Lemma \ref{lem_constant} that the following choice for $C(x)$ is thus suitable:
\begin{equation*}
     C(x) = \textstyle \sum_{\ell = 1}^q \displaystyle P_\ell\left(\frac{1}{w(x) - w(x_\ell)}\right).
\end{equation*}
We conclude that the function $\mathcal H(x)-C(x)$ is constant. Since $w(1)=\infty$ (see \ref{W-continuous}), we have $C(1)=0$ (imposing $P_\ell(0)=0$). Furthermore, we have $H(1,0)=\infty$ (Corollary~\ref{cor:exactly1}), so that $\mathcal H(1)=0$, and the constant function $\mathcal H(x)-C(x)$ is zero. Finally, we have
\begin{equation*}
     H(x,0) = \frac{1}{L(x,0)}\left(z_0+\frac{1}{C(x)}\right).
\end{equation*}

\subsection{Form of the rational function}
\label{subsec:2}

To summarize Section \ref{subsec:1}, given some $z_0\in{\bf C}\setminus {\bf R}$, we have constructed a rational function 
\begin{equation}
\label{eq:rational-R}
     R(X)=\textstyle\sum_{\ell = 1}^q \displaystyle P_\ell\left(\frac{1}{X-w(x_\ell)}\right)
\end{equation}
such that $H(x,0)=(z_0+1/R(w(x)))/L(x,0)$. We now prove that $q=1$ in \eqref{eq:rational-R}, and that the polynomial $P_1$ has order $1$.

\begin{lem}
\label{lem:1/R-polynomial}
The rational function $1/R(X)$ must be a polynomial.
\end{lem}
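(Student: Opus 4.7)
The strategy is to prove that $R(X)$ has no finite zeros; since $R$ is rational, this forces $1/R(X)$ to be a rational function with no finite poles, i.e.\ a polynomial. So the whole task reduces to ruling out zeros of $R$ on $\mathbf{C}$.

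The first step is to transfer regularity from $H(x,0)$ to $R(w(x))$. By \ref{class_function} and \ref{class_function_2}, $H(x,0)$ is analytic on $\mathscr G_X$ and continuous on $\overline{\mathscr G_X}\setminus\{1\}$. Since $L(x,0)$ is a polynomial, the product
\begin{equation*}
   L(x,0)\,H(x,0)=z_0+\frac{1}{R(w(x))}
\end{equation*}
inherits both properties. Consequently, $1/R(w(x))$ must be finite on all of $\overline{\mathscr G_X}\setminus\{1\}$, i.e.\ $R(w(x))\neq 0$ for every $x\in\overline{\mathscr G_X}\setminus\{1\}$. The second step is to show that the image $w\bigl(\overline{\mathscr G_X}\setminus\{1\}\bigr)$ is all of $\mathbf C$. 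By \ref{W-conformal}, $w$ is a bijection from $\mathscr G_X$ onto $\mathbf C$ minus a segment $[a,b]$; by \ref{W-1}, $w(x)\to\infty$ as $x\to 1$; and by \ref{W-boundary}, $w$ takes real values on the boundary $X([y_1,1])$. Thus the interior already covers $\mathbf C\setminus[a,b]$, and I would argue that the boundary (minus the point sent to $\infty$) fills the slit $[a,b]$. Putting the two steps together, $R$ has no zero on $\mathbf C$, and we are done.

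The one point that requires justification is the surjectivity onto the slit. The argument I have in mind uses that $\mathscr G_X$ is bounded (as the interior of the compact Jordan curve $X([y_1,1])$), so that $\overline{\mathscr G_X}$ is compact. For an arbitrary $X_\ast\in[a,b]$, approach it by a sequence $X_n\to X_\ast$ from outside the slit and set $x_n=w^{-1}(X_n)\in\mathscr G_X$. Extract a convergent subsequence $x_n\to x_\ast\in\overline{\mathscr G_X}$; the limit cannot be $1$ since $w(x_n)$ stays bounded while $w(x)\to\infty$ near $1$ by \ref{W-1}. Continuity of $w$ on $\overline{\mathscr G_X}\setminus\{1\}$ then gives $w(x_\ast)=X_\ast$, as needed. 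This geometric bookkeeping is the only delicate part; the rest of the lemma is a direct consequence of the boundary value problem \ref{class_function}--\ref{boundary_condition} satisfied by $H(x,0)$ and the conformal nature of $w$ recorded in \ref{W-analytic}--\ref{W-boundary}.
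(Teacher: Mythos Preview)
Your proof is correct and follows essentially the same route as the paper: assume $R$ has a finite zero $z_1$, pull it back via $w$ to a point $t_1\in\overline{\mathscr G_X}\setminus\{1\}$, and obtain a contradiction with the analyticity/continuity of $H(x,0)$ recorded in \ref{class_function}--\ref{class_function_2}. The paper simply asserts the existence of such a $t_1$ (with a footnote noting it may lie on the boundary), whereas you supply the compactness-and-continuity argument showing that $w$ maps $\overline{\mathscr G_X}\setminus\{1\}$ onto all of $\mathbf C$; this is a genuine detail the paper omits, and your justification is sound (with the minor caveat that $\overline{\mathscr G_X}$ is the closure in $\mathbf C\cup\{\infty\}$, so compactness should be read in the Riemann sphere rather than relying on $\mathscr G_X$ being bounded in $\mathbf C$).
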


Before proving Lemma \ref{lem:1/R-polynomial}, note its following consequence:
\begin{cor}
\label{cor:q=1}
We have $q=1$ in \eqref{eq:rational-R} and $P_1(X)=\mu X^{n_1}$, for some $\mu\in{\bf C}$ and $n_1\in{\bf Z}_+$.
\end{cor}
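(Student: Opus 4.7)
The plan is to exploit Lemma \ref{lem:1/R-polynomial} to identify the polynomial $1/R(X)$ as $\Phi(X)-z_0$ for a polynomial $\Phi$ depending only on the harmonic function $f$ (and not on $z_0$), and then to force $\deg\Phi=1$ by a rigidity argument in $z_0$, from which the corollary follows immediately.

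From the identification $R(w(x))=\mathcal{H}(x)=1/(L(x,0)H(x,0)-z_0)$ in Section \ref{subsec:1} together with Lemma \ref{lem:1/R-polynomial}, I would write $1/R(X)=\Phi(X)-z_0$ with
\begin{equation*}
\Phi(X):=L(w^{-1}(X),0)\,H(w^{-1}(X),0),
\end{equation*}
a polynomial that does \emph{not} depend on $z_0$. Since $L$, $H(\cdot,0)$, and $w$ all have real coefficients (the last by inspection of \eqref{def_f}--\eqref{def_CGF_u}), $\Phi$ is a real polynomial, and Lemma \ref{lem:1/R-polynomial} yields
\begin{equation*}
\Phi(X)-z_0 \;=\; c\prod_{\ell=1}^{q}(X-w(x_\ell))^{n_\ell}, \qquad c\neq 0, \qquad N:=\deg\Phi=\sum_\ell n_\ell.
\end{equation*}

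Since the construction of Sections \ref{subsec:1}--\ref{subsec:2} is valid for any $z_0\in{\bf C}\setminus{\bf R}$, the corollary's assertion $q=1$ is to be read uniformly in $z_0$. Suppose, towards contradiction, that $N\geq 2$. Then for each $z_0$, the single-root factorization $\Phi(X)-z_0=c(z_0)(X-r(z_0))^N$ holds. Differentiating in $X$,
\begin{equation*}
\Phi'(X)=Nc(z_0)(X-r(z_0))^{N-1};
\end{equation*}
the left-hand side does not depend on $z_0$, while the right-hand side is a polynomial of degree $N-1\geq 1$. Matching leading coefficients and roots forces $c(z_0)=c(z_0')$ and $r(z_0)=r(z_0')$ for any two non-real $z_0\neq z_0'$, hence $\Phi(X)-z_0=\Phi(X)-z_0'$, giving $z_0=z_0'$---a contradiction. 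Therefore $N=1$.

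With $N=1$, the polynomial $\Phi(X)-z_0$ has a single simple root $X=w(x_1)$, giving $q=1$, $n_1=1$, and the principal part of $R(X)=1/(\Phi(X)-z_0)$ at $w(x_1)$ is simply $\mu/(X-w(x_1))$, so $P_1(Y)=\mu Y$, as required. The main obstacle I expect is the step where the corollary is read uniformly in $z_0$; for a single fixed $z_0$, an alternative complex-conjugation argument derives $\bar c(X-\overline{w(x_1)})^N-c(X-w(x_1))^N=z_0-\bar z_0$ and compares coefficients of $X^{N-1}$ to conclude $w(x_1)\in{\bf R}$ for $N\geq 2$, contradicting the non-reality of $x_1$ (since $L(x_1,0)H(x_1,0)=z_0\notin{\bf R}$ while $L\cdot H$ is real on the real axis).
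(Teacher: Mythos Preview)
Your argument is circular at the decisive step. When you write ``Suppose, towards contradiction, that $N\geq 2$. Then for each $z_0$, the single-root factorization $\Phi(X)-z_0=c(z_0)(X-r(z_0))^N$ holds,'' you are asserting that $\Phi(X)-z_0$ has exactly one distinct root---but that is precisely the statement $q=1$ you are asked to prove. Nothing in Lemma~\ref{lem:1/R-polynomial} or in the hypothesis $N\geq 2$ forces a single root: a degree-$N$ polynomial $\Phi(X)-z_0$ generically has $N$ distinct roots (e.g.\ $\Phi(X)=X(X-1)$ gives a polynomial $1/R$ with $q=2$ for every non-critical $z_0$). Your ``alternative complex-conjugation argument'' has the same defect: the identity $\bar c(X-\overline{w(x_1)})^N-c(X-w(x_1))^N=z_0-\bar z_0$ already presupposes a \emph{single} root $w(x_1)$ of full multiplicity~$N$. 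What you have actually written there is essentially the paper's \emph{next} lemma (deriving $n_1=1$ \emph{from} $q=1$), not a proof of $q=1$.

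By contrast, the paper's proof does not vary $z_0$ at all: it simply invokes, as ``elementary,'' the algebraic claim that for $R$ of the form \eqref{eq:rational-R} the reciprocal $1/R$ is a polynomial only when $q=1$ and $P_1$ is a pure power. Your introduction of the $z_0$-independent real polynomial $\Phi$ with $L(x,0)H(x,0)=\Phi(w(x))$ is a genuinely good and different idea, since it reduces everything to bounding $\deg\Phi$. To salvage your route you would need to prove $\deg\Phi=1$ \emph{directly}---for instance by comparing the growth of $L(x,0)H(x,0)$ as $x\to 1^-$ with that of $w(x)$ in \eqref{behavior_chi}, or by an injectivity/conformality argument for $x\mapsto L(x,0)H(x,0)$ on $\mathscr{G}_X$---rather than by assuming the single-root structure you wish to establish.
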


\begin{proof}%[Proof of Corollary \ref{cor:q=1}]
It is elementary to prove that for a rational function $R(X)$ as in \eqref{eq:rational-R}, a necessary and sufficient condition for $1/R(X)$ to be a polynomial is that $q=1$ and $P_1(X)=\mu X^{n_1}$ in \eqref{eq:rational-R}.
\end{proof}

\begin{proof}[Proof of Lemma \ref{lem:1/R-polynomial}]
Assume that $1/R(X)$ is not a polynomial. Then there exists $z_1\in{\bf C}$ such that $1/R(z_1)=\infty$. Introduce $t_1\in\overline{\mathscr G_X}$ such that\footnote{If $t_1$ belongs to the (open) set $\mathscr G_X$, then $t_1$ is unique, while if $t_1$ is on the boundary, two values (namely, $t_1$ and $\overline{t_1}$) are possible.} $w(t_1)=z_1$. Note that $t_1\neq 1$, as $w(1)=\infty$. We conclude that $H(t_1,0)=\infty$, which is an apparent contradiction with \ref{class_function} or \ref{class_function_2} of Section \ref{ASFE}.
\end{proof}

So far, we have shown that there exists a constant $\mu$ (equal to $\mu^{-1}$ with the notations of Corollary \ref{cor:q=1}) such that
\begin{equation}
\label{eq:lbo}
     H(x,0) = \frac{z_0+\mu(w(x)-w(x_1))^{n_1}}{L(x,0)}.
\end{equation}

\begin{lem}
     We have $n_1 = 1$ and $\mu\in{\bf R}\setminus \{0\}$ in \eqref{eq:lbo}.
\end{lem}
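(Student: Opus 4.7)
The plan is to exploit the freedom in the choice of $z_0\in{\bf C}\setminus{\bf R}$ in the construction of Section~\ref{subsec:1}. Repeating that construction for any other non-real $z_0'$ gives a second decomposition $L(x,0)H(x,0)=z_0'+\mu'(w(x)-w(x_1'))^{n_1'}$. By property~\ref{W-conformal}, $w$ is a conformal bijection from $\mathscr{G}_X$ onto the complex plane cut along a segment, so after the substitution $W=w(x)$ both decompositions become polynomials in $W$ coinciding on an open subset of ${\bf C}$, hence as polynomial identities on all of ${\bf C}$:
\begin{equation*}
z_0+\mu(W-W_1)^{n_1}=z_0'+\mu'(W-W_1')^{n_1'},
\end{equation*}
with $W_1=w(x_1)$ and $W_1'=w(x_1')$.

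Matching degrees forces $n_1=n_1'$. If $n_1\geq 2$, then comparing the leading and next-to-leading coefficients forces $\mu=\mu'$ and $W_1=W_1'$, and the constant terms then yield $z_0=z_0'$, contradicting $z_0\neq z_0'$. Hence $n_1=1$, and the leading-coefficient identification also shows that $\mu$ is intrinsic (independent of $z_0$).

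With $n_1=1$ in hand, the identity reads $L(x,0)H(x,0)=\mu w(x)+C$ for some constant $C$. To see that $\mu\in{\bf R}$, I would compare Taylor coefficients at $x=0$. The left-hand side has real coefficients (because $L(x,0)$ does and $f(i_0,1)\in{\bf R}$), and so does $w(x)$: by property~\ref{W-boundary} together with the symmetry of $\mathscr{G}_X$ about~${\bf R}$, Schwarz reflection gives $w(\overline{x})=\overline{w(x)}$ and thus real Taylor coefficients at $0$. Since $w$ is conformal at $0$, $w'(0)\in{\bf R}\setminus\{0\}$, and reading off the coefficient of $x$ yields $\mu\in{\bf R}$. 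To rule out $\mu=0$, I would observe that otherwise $H(x,0)=C/L(x,0)$; assumption~\ref{non_degenerate} prevents $p_{-1,1}=p_{0,1}=p_{1,1}=0$ (three cyclically consecutive entries of the listed octuple), so $L(1,0)\neq 0$ and $H(1,0)$ would be finite, contradicting Lemma~\ref{lemma:radius_convergence_zero_drift}.

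The main hurdle I expect is the first step: cleanly passing from an analytic identity in $x\in\mathscr{G}_X$ to a polynomial identity in $W\in{\bf C}$. This relies on the global surjectivity of $w$ onto the complex plane minus a segment (property~\ref{W-conformal}): polynomial identities holding on an open subset of ${\bf C}$ extend to identities on all of ${\bf C}$, and this is precisely what powers the coefficient-matching that pins down $n_1$ and $\mu$.
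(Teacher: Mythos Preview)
Your argument is correct and takes a genuinely different route from the paper. The paper works with a single $z_0$ and exploits that $L(x,0)H(x,0)$ is real for $x\in(0,1)$ while $w(x)\to+\infty$ along the reals as $x\to1^-$: the polynomial $z_0+\mu(W-w(x_1))^{n_1}$ must then be real for all large real $W$, which forces $\mu\in{\bf R}$, and if $n_1\geq2$ the binomial expansion successively forces $w(x_1)\in{\bf R}$ and finally $z_0\in{\bf R}$, a contradiction. For $\mu\neq0$ the paper simply notes that $1/L(x,0)$ cannot be a power series with positive coefficients. Your approach instead varies $z_0$ and compares the two resulting polynomial representations of $L(x,0)H(x,0)$ in the variable $W=w(x)$; coefficient matching then pins down $n_1=1$ and shows $\mu$ is intrinsic. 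This trades the local asymptotic analysis at $x=1$ for the global bijectivity \ref{W-conformal}, and has the pleasant side effect of making the $z_0$-independence of $\mu$ explicit. Your arguments for $\mu\in{\bf R}$ (Taylor coefficients at $0$) and $\mu\neq0$ (via $H(1,0)=\infty$ and $L(1,0)=p_{-1,1}+p_{0,1}+p_{1,1}>0$ under \ref{non_degenerate}) are also valid alternatives.

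One small wrinkle: the Schwarz-reflection step asserting $w(\bar x)=\overline{w(x)}$ is not quite complete as written. The difference $w(x)-\overline{w(\bar x)}$ vanishes on the boundary away from $1$, but its leading behavior near $1$ is $(c-\bar c)(1-x)^{-\pi/\theta}$, so you cannot invoke Lemma~\ref{lem_constant} without first knowing $c\in{\bf R}$. The conclusion is nonetheless true and follows immediately from the explicit real formula \eqref{def_f}--\eqref{def_CGF_u} for $w$, so this is a one-line patch rather than a gap in the strategy.
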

\begin{proof}
First, we have $\mu\neq 0$ (we already saw that $1/L(x,0)$ cannot be the generating function of positive numbers). As $x\in (0,1)$ goes to $1$, both $w(x)$ and $H(x,0)$ are real and go to $\infty$. For the polynomial $z_0+\mu(X-w(x_1))^{n_1}$ to be real, we must have $\mu\in{\bf R}$. If $n_1\geq 2$, the same argument (together with the binomial formula) gives that $w(x_1)$ and $z_0$ have to be real too. But $z_0\in{\bf C}\setminus {\bf R}$, so that $n_1=1$.
\end{proof}
Finally, we have proved \eqref{eq:l}, and it remains to compute the constants $\mu$ and $\nu$.

\subsection{Computation of the constants}
\label{subsec:3}

This part aims at computing the values of $\mu$ and $\nu$ in \eqref{eq:l} given in \eqref{eq:mu-nu}.
\subsubsection*{Case $L(0,0) = 0$} This is a simple case. First, taking the limit of \eqref{eq:l} as $x\to0$ gives $\nu=0$. The value of $\mu$ then follows by examining the first term (if $p_{1,1}=0$ and $p_{0,1}\neq 0$) or the second term (if $p_{1,1}=0$ and $p_{0,1}= 0$) in the expansion of \eqref{eq:l} at $x=0$.

Before studying the case $L(0,0)\neq 0$ we need to analyze the behavior of the function $w(x)$ at the boundary point $1$.
\subsubsection*{Singularity of the conformal mapping at the boundary point $1$}
We know from \ref{W-continuous} that $w(1)=\infty$. More precisely, it is proved in \cite{FR2} that there exists $c\neq 0$ such that for $x$ in the neighborhood of $1$,
%\begin{enumerate}[label=(W\arabic{*}),ref={\rm (W\arabic{*})}]
%\setcounter{enumi}{4}
    % \item \label{W-conformal} $w$ establishes a conformal mapping of $\mathscr{G}_X$ onto the complex plane ${\bf C}$ cut along a segment;
     %\item \label{W-1} 
     \begin{equation}
     \label{behavior_chi}
          w(x)=\frac{c +o(1)}{(1-x)^{\pi/\theta}}.
     \end{equation}
%\end{enumerate}
%This important property could be deduced from \eqref{def_CGF_u-alternative}.
This very important property can be deduced rather easily from the fact that $w(x)$ is a conformal mapping and from the equality $w(x)=w(\overline{x})$ for $x\in X([y_1,1])$ near $1$ (Figure~\ref{fig:ex_curve}).

\subsubsection*{Case $L(0,0)\neq 0$}

A similar reasoning as in Sections \ref{subsec:1} and \ref{subsec:2} gives that
\begin{equation}
\label{eq:ltilde}
     H(0,y) = \widetilde \mu\frac{ \widetilde w(y)+ \widetilde \nu}{L(0,y)},
\end{equation}
where $\widetilde w(y)$ is the analogue of $w(x)$, replacing the domain $\mathscr G_X$ by $\mathscr G_Y$, the domain bounded by the curve $Y([x_1,1])$ and containing $y_1$. Similar properties as \ref{W-boundary}--\ref{W-continuous} hold for $\widetilde w(y)$. Further, we fix $\widetilde w(y)$ so as $\widetilde w(0)=0$ and \eqref{behavior_chi} holds (with exactly the same value of $c$). 

We show that for \eqref{eq:l} and \eqref{eq:ltilde} to hold, we must have $\mu = \widetilde \mu$. To this aim, evaluate the functional equation \eqref{eq:functional_equation} at $(X_0(y),y)$; we obtain
\begin{equation*}
     0= L(X_0(y),0)H(X_0(y),0)+L(0,y)H(0,y)-L(0,0)H(0,0).
\end{equation*}
Using \eqref{eq:l} and \eqref{eq:ltilde}, we may write
\begin{equation}
\label{eq:btm}
     0= \mu(w(X_0(y))+\nu)+\widetilde \mu(\widetilde w(y)+\widetilde \nu)-L(0,0)H(0,0).
\end{equation}
In the neighborhood of $1$, we have $X_0(y) = 1+\exp(i \theta) (y-1)+o(y-1)$, see \cite[Equation (2.8)]{FR2}, so that
\begin{equation*}
     w(X_0(y)) = \frac{c+o(1)}{(1-X_0(y))^{\pi/\theta}} = -\frac{c+o(1)}{(1-y)^{\pi/\theta}}.
\end{equation*}
Multiplying Equation \eqref{eq:btm} by $(1-y)^{\pi/\theta}$ and letting $y\to1$, we obtain that $\mu=\widetilde \mu$.

Now we return to \eqref{eq:l} and \eqref{eq:ltilde}. Evaluating them at $x=0$ and $y=0$, respectively, we obtain that $\nu = \widetilde \nu = p_{1,1}f(1,1)/\mu$. To find $\mu$ in the case $L(0,0)\neq 0$, we can use \eqref{eq:btm}:
\begin{equation*}
%\label{eq:explicit-mu}
     \mu = -\frac{p_{1,1} f(1,1)}{w(X_0(y))+\widetilde w(y)}.
\end{equation*}
The function in the denominator above is constant, and can be evaluated at any value of $y$, for example $0$.

\subsection{A perfect conformal mapping}
\label{mapping}

In this part we study the function $w(x)$ satisfying \ref{W-boundary}--\ref{W-continuous}, we show in particular that it can be expressed in terms of sine and arcsine functions as in \eqref{def_CGF_u}.

%\subsubsection*{Expression for the conformal mapping}
Because of \ref{W-conformal} and \ref{W-analytic}, the function $w(x)$ is a conformal mapping, in the sense that it maps conformally the domain $\mathscr{G}_X$ onto some domain (in fact, the complex plane ${\bf C}$ cut along some interval). More precisely, $w(x)$ is a conformal gluing function \cite{FIM,LIT}, as it glues together the upper lower edges of the boundary of $\mathscr{G}_X$ (see \ref{W-boundary}). However, this information (that $w(x)$ is a conformal mapping---or even a conformal gluing) is a priori not sufficient to determine the function $w(x)$ explicitly. Indeed, for a given general domain, it is a very difficult task to find an expression for the associated conformal mapping. In the literature, explicit expressions are available for quite particular domains only (circles, ellipses, rectangles, strips, etc.). In the case we are interested in, the domain $\mathscr{G}_X$ and its boundary $X([y_1,1])$ are explicit: the latter is in general a quartic curve \cite{FIM}; it can be degenerated into circles (for the simple random walk, for instance) or other simpler domains.

Two expressions for the function $w(x)$ have already been found in the literature. First, a study is proposed in \cite[Section 6.5]{FIM}, which ends up in a formula for $w(x)$. The method used there is to transform the curve $X([y_1,1])$ (thanks to a uniformization of the Riemann surface defined by the zeros of the kernel \eqref{eq:def_L}), so as to make the transformed curve simpler (it becomes the union of two circular arcs), and finally to use a standard expression for the conformal mapping of the simpler curve. Second, the function $w(x)$ has also been carefully analyzed in \cite{FR2}, by using different techniques. Namely, the conformal mapping was first computed in the case of random walks with non-zero drift, and it happens that when the drift goes to $0$, the conformal mapping converges to the conformal function in the zero drift case. In particular, the expression \eqref{def_CGF_u} was derived, see \cite[Section 2.2]{FR2}. 

A priori, the function $w(x)$ in \eqref{def_CGF_u} is well defined only for $\vert T\vert=\vert T(x)\vert \leq 1$. For $\vert T\vert\geq 1$, we have $\arcsin (T) = \pi/2\pm i\ln(T+\sqrt{T^2-1})$, and as in \cite[Section 2.2]{FR2} we can write
\begin{equation}
\label{def_CGF_u-alternative}
     w(x) = -\frac{1}{4} \left[\left(T+\sqrt{T^2-1}\right)^{2\pi/\theta}-2+\left(T-\sqrt{T^2-1}\right)^{2\pi/\theta}\right].
\end{equation}

\subsection{Examples}
In this section we consider two examples, the simple random walk (Figure~\ref{SRW}) and a random walk related to the Lie algebra $\mathfrak{sl}_{3}({\bf C})$ (Figure \ref{SecondRW}), for which we compute the harmonic function $f(i_0,j_0)$ by applying Theorem \ref{thm:main-harmonic}.

\subsubsection*{Example 1 (continued)}   

Thanks to Theorem \ref{thm:main-harmonic}, to compute $H(x,0)$ it suffices to calculate $w(x)$, $L(x,0)$, $\mu$ and $\nu$. For the simple random walk, it is easy to compute 
\begin{equation*}
     \left\{\begin{array}{rcl}
     x_4&= & 3+2\sqrt{2},\\
     \delta'(x_4) &=& 4+3\sqrt{2},\\
     \delta''(x_4)&=&11/2+3\sqrt{2},\\
     \delta''(1)&=&-1/2,
     \end{array}\right.
\end{equation*}    
as well as
 \begin{equation*}
     T(x) =\displaystyle  \sqrt{\frac{x-3-2\sqrt{2}}{2(2+\sqrt{2})(x-1)}}.
\end{equation*}
%$x_4 = 3+2\sqrt{2}$, $\delta'(x_4) = 4+3\sqrt{2}$, $\delta''(x_4)=11/2+3\sqrt{2}$, $\delta''(1)=-1/2$, $T(x) = [2(2+\sqrt{2})(x-1)/(x-3-2\sqrt{2})]^{-1/2}$. 
Further, $\pi/\theta=2$, so that with \eqref{def_CGF_u-alternative}, 
\begin{equation*}
\label{def_CGF_u-SRW}
     w(x) = -4T(x)^2(T(x)^2-1) = \frac{(x-3+2\sqrt{2})(x-3-2\sqrt{2})}{2(x-1)^2}=\frac{1}{2}-\frac{2x}{(x-1)^2}.
\end{equation*}
Subtracting by $w(0)$ we find $-2x/(x-1)^2$. Furthermore, we have $L(x,0) = x/4$, so that $L(0,0)=0$, and for this reason $\nu=0$. Finally, with \eqref{eq:mu-nu}, $\mu=f(1,1)/4$, and thus 
     \begin{equation*}
          H(x,0) = \frac{f(1,1)}{(1-x)^2}.
     \end{equation*}
This does match with \eqref{eq:expression_HHH_SRW}.

\subsubsection*{A second example} 

We now turn to a second example, more sophisticated than the simple random walk, whose transition probabilities are represented on Figure \ref{SecondRW}: $p_{1,0}=p_{0,-1}=p_{-1,1}=1/3$. This random walk is well studied in the literature, notably because it can be interpreted as a walk in the Weyl chamber of the Lie algebra $\mathfrak{sl}_{3}({\bf C})$, see \cite{Bi2}. It is proved in \cite{Bi2} that there is a unique harmonic function, namely $f(i_0,j_0)=i_0j_0(i_0+j_0)/2$. Let us retrieve this result by using our techniques (namely, by applying Theorem \ref{thm:main-harmonic}). 

\unitlength=1.1cm
\begin{figure}[t]
        \begin{picture}(6,5.5)
    \thicklines
    \put(1,1){{\vector(1,0){4.5}}}
    \put(1,1){\vector(0,1){4.5}}
    \thinlines
    \put(3,3){\vector(1,0){1}}
    \put(3,3){\vector(-1,1){1}}
    \put(3,3){\vector(0,-1){1}}
    \put(4.05,3.15){$1/3$}
    \put(1.45,4.15){$1/3$}
    \put(3.05,1.7){$1/3$}
    \linethickness{0.1mm}
    \put(1,2){\dottedline{0.1}(0,0)(4.5,0)}
    \put(1,3){\dottedline{0.1}(0,0)(4.5,0)}
    \put(1,4){\dottedline{0.1}(0,0)(4.5,0)}
    \put(1,5){\dottedline{0.1}(0,0)(4.5,0)}
    \put(2,1){\dottedline{0.1}(0,0)(0,4.5)}
    \put(3,1){\dottedline{0.1}(0,0)(0,4.5)}
    \put(4,1){\dottedline{0.1}(0,0)(0,4.5)}
    \put(5,1){\dottedline{0.1}(0,0)(0,4.5)}
\end{picture}
  \vspace{-10mm}
\caption{A second example of random walk in the quarter plane}
\label{SecondRW}
\end{figure}
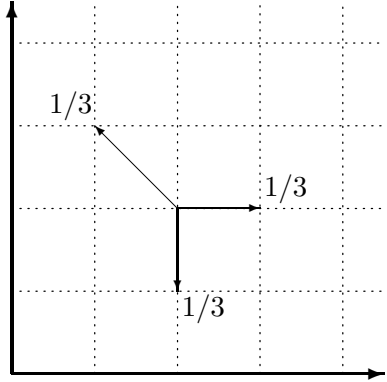

One has $x_4=\infty$. Using the formula \eqref{def_f} we obtain $T(x)=\sqrt{3}/(2\sqrt{1-x})$. Moreover, the quantity $\theta$ is equal to $\pi/3$, see \eqref{exp_theta}, and thus with \eqref{def_CGF_u-alternative} we reach the conclusion that
\begin{align*}
     w(x) &= -\frac{1}{4} \left[\left(T(x)+\sqrt{T(x)^2-1}\right)^6-2+\left(T(x)-\sqrt{T(x)^2-1}\right)^6\right]\\
             &= -(T(x)-1)(T(x)+1)(2T(x)-1)^2(2T(x)+1)^2\\
             &= \frac{(x-1/4)(x+2)^2}{(x-1)^3}.
\end{align*}
Subtracting by $w(0)$ we obtain 
\begin{equation*}
     w(x) = \frac{27}{4}\frac{x^2}{(x-1)^3}.
\end{equation*}
Since $p_{1,1}=p_{0,1}=0$ we have $\nu=0$ in \eqref{eq:mu-nu} and for $\mu$ we use the above formula in \eqref{eq:mu-nu}. After easy computations and using that $L(x,0)=x^2/3$ we obtain
\begin{equation*}
     H(x,0)=\frac{f(1,1)}{(1-x)^3}.
\end{equation*}
Similarly we could prove that 
\begin{equation*}
     H(y,0)=\frac{f(1,1)}{(1-y)^3}.
\end{equation*}
Finally, using the basic functional equation yields
\begin{equation*}
     H(x,y)=\frac{1-xy}{(1-x)^3(1-y)^3} = \textstyle\sum_{i_0,j_0\geq 1}\displaystyle\frac{ i_0j_0(i_0+j_0)}{2}x^{i_0-1}y^{j_0-1},
\end{equation*}
which indeed corresponds to the result in \cite{Bi2}.

\section{Applications}
\label{MB}
\setcounter{equation}{0}

In this section we present three applications of our results, which successively concern Martin boundary, exit time from cones, and lattice path enumeration.

\subsection{Martin boundary}
\label{subsec:MB}

\subsubsection*{Brief account on Martin boundary theory \cite{Dynkin}}

For a (transient) Markov chain with state space $E$, the {Martin compactification} of $E$ is the smallest compactification $\overline{E}$ of $E$ for which the Martin kernels 
\begin{equation*}
     y\mapsto k_{y}^{x}=G_{y}^{x}/G_{y}^{x_0}
\end{equation*}      
extend continuously ($G_{y}^{x}$ is the {Green function}, and we denote by $x_0$ any reference state). The set $\overline{E}\setminus E$ is called the {full Martin boundary}. For  $\alpha\in\overline{E}$, $x\mapsto k_{\alpha}^{x}$ is superharmonic, and 
\begin{equation*}
     \partial_{m}E=\{\alpha \in \overline{E}\setminus E : x\mapsto k_{\alpha}^{x}\ \text{is minimal harmonic}\}
\end{equation*}     
is the {minimal Martin boundary}\footnote{A harmonic function $h$ is minimal if $0\leq g\leq h$ with $g$ harmonic implies $g=c h$ for some constant $c$.}. Then any superharmonic function $h$ can be written as 
\begin{equation*}
     h(x)=\int_{\overline{E}}k_{y}^{x}\mu(\text{d}y)
\end{equation*} 
and any harmonic function $h$ can be written as
\begin{equation*}
     h(x)=\int_{\partial_{m}E}k_{y}^{x}\mu(\text{d}y),
\end{equation*}
where $\mu$ is some finite measure, uniquely characterized in the case just above.

\subsubsection*{Our contribution}
In Section \ref{solving}, provided that the drift of the walk is zero, we have proved the uniqueness of the harmonic function. In other words:

\begin{thm}
\label{thm:MB}
For all random walks satisfying \ref{small_jumps}, \ref{non_degenerate} and \ref{drift}, the Martin boundary is reduced to one point.
\end{thm}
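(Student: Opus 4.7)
My plan is to deduce Theorem \ref{thm:MB} directly from the uniqueness established by Theorem \ref{thm:main-harmonic}. I would first invoke the standard correspondence of Martin boundary theory: every point $\alpha$ of the minimal Martin boundary $\partial_m E$ gives a minimal positive harmonic function $x\mapsto k_\alpha^x$ for the killed walk. Such a function vanishes off the interior of ${\bf Z}_+^2$ (which is the state space of the killed walk), so it satisfies \ref{property_zero}. Assumption \ref{non_degenerate} implies that the killed walk restricted to $\{(i_0,j_0) : i_0,j_0\geq 1\}$ is irreducible, so any nonzero nonnegative harmonic function is in fact strictly positive throughout the interior, giving \ref{property_positive}. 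Thus every minimal positive harmonic function satisfies all of \ref{property_harmonicity}, \ref{property_zero} and \ref{property_positive}.

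By Theorem \ref{thm:main-harmonic}, such an $f$ is entirely determined by its value $f(1,1)$: the generating function $H(x,0)$ is specified by \eqref{eq:l} with the constants \eqref{eq:mu-nu} proportional to $f(1,1)$, a symmetric computation gives $H(0,y)$, the functional equation \eqref{eq:functional_equation} then recovers $H(x,y)$, and Cauchy's formul\ae\ yield every value $f(i_0,j_0)$. Consequently the cone of positive harmonic functions satisfying \ref{property_harmonicity}--\ref{property_positive} is one-dimensional, so up to a positive scalar there is a unique minimal positive harmonic function and $\partial_m E$ is reduced to a single point. To pass to the full Martin boundary, I would fix any sequence $y_n\in E$ leaving every finite set and consider a subsequential limit $k(x)=\lim k_{y_n}^x$, which exists by compactness of $\overline E$. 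Then $k$ is positive superharmonic with $k(x_0)=1$; its Riesz decomposition has vanishing potential part because the singularity of $G_{y_n}^{\cdot}/G_{y_n}^{x_0}$ escapes to infinity, so $k$ is harmonic, and by one-dimensionality together with the normalization $k(x_0)=1$ it equals the unique harmonic function. All subsequential limits therefore coincide, and the full Martin compactification also adds a single point.

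The main obstacle is not analytic but probabilistic: all the analytic work has been done in Sections \ref{explicit}--\ref{solving}. What remains is to verify that minimal positive harmonic functions of the killed walk indeed fall into the class \ref{property_harmonicity}--\ref{property_positive}, in particular the strict positivity \ref{property_positive} coming from irreducibility (granted by \ref{non_degenerate}), and, for the full Martin boundary, that potential components vanish in limits of normalized Green functions (which uses transience of the killed walk in the quadrant, itself automatic since the zero-drift walk exits the quadrant almost surely). Both facts are standard, but they must be invoked explicitly to bridge the gap between the uniqueness of the harmonic function and the Martin-boundary statement.
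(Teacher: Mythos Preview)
Your proposal is correct and follows the same route as the paper: the theorem is presented there simply as a restatement of the uniqueness obtained in Section~\ref{solving} (via Theorem~\ref{thm:main-harmonic}), with no further argument. You supply the bridge the paper leaves implicit---checking that minimal Martin kernels satisfy \ref{property_harmonicity}--\ref{property_positive} (using irreducibility from \ref{non_degenerate}) and that every Martin limit is harmonic---so your write-up is strictly more detailed than the paper's one-line deduction, but the underlying approach is identical.
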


In particular, all results in \cite{Bi2,PW,RaSp4} regarding the uniqueness of the harmonic function for some particular cases of walks in the quarter plane follow from Theorem \ref{thm:MB}. Let us be more specific. Put in the framework of this article, it is proved in \cite{Bi2} that for the model of Figure \ref{SecondRW} there is a unique harmonic function. In \cite{PW} it is shown that there is uniqueness of the harmonic function for any cartesian product of random walks with zero drift (this covers the simple random walk of Figure \ref{SRW}). In \cite{RaSp4} the random walks with jumps as in Figure \ref{Third example} are studied, and the uniqueness of the harmonic function is obtained (via the asymptotic behavior of the Green functions and the theory of Martin boundary). 

\unitlength=1.1cm
\begin{figure}[t]
        \begin{picture}(6,5.5)
    \thicklines
    \put(1,1){{\vector(1,0){4.5}}}
    \put(1,1){\vector(0,1){4.5}}
    \thinlines
    \put(3,3){\vector(1,0){1}}
    \put(3,3){\vector(-1,0){1}}
    \put(3,3){\vector(1,-1){1}}
    \put(3,3){\vector(-1,1){1}}
    \linethickness{0.1mm}
    \put(1,2){\dottedline{0.1}(0,0)(4.5,0)}
    \put(1,3){\dottedline{0.1}(0,0)(4.5,0)}
    \put(1,4){\dottedline{0.1}(0,0)(4.5,0)}
    \put(1,5){\dottedline{0.1}(0,0)(4.5,0)}
    \put(2,1){\dottedline{0.1}(0,0)(0,4.5)}
    \put(3,1){\dottedline{0.1}(0,0)(0,4.5)}
    \put(4,1){\dottedline{0.1}(0,0)(0,4.5)}
    \put(5,1){\dottedline{0.1}(0,0)(0,4.5)}
    \put(4.05,3.2){$p_{1,0}$}
    \put(4.05,2.2){$p_{1,-1}$}
    \put(1.05,3.7){$p_{-1,1}$}
    \put(1.05,2.7){$p_{-1,0}$}
\end{picture}
  \vspace{-10mm}
\caption{A third example of walk in the quarter plane, with transition probabilities $p_{1,0}=p_{-1,0}=\sin(\pi/n)^2/2$ and $p_{1,-1}=p_{-1,1}=\cos(\pi/n)^2/2$, for a fixed value of $n\geq 3$}
\label{Third example}
\end{figure}
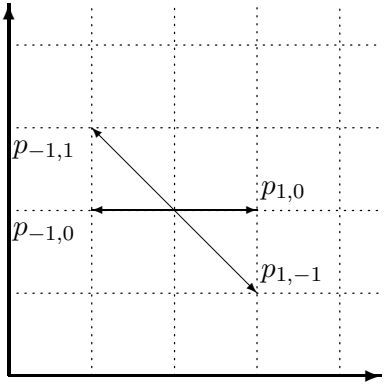

\subsubsection*{A conjecture on the uniqueness of harmonic functions}

In fact, it is our opinion that Theorem \ref{thm:MB} can be extended as follows:
\begin{conj}
For any random walks in the quarter plane with zero drift \ref{drift} and square integrable increments, the Martin boundary is reduced to one point.
\end{conj}

\subsection{Exit time from the cone}
As written in the introduction, it is proved in \cite{DW} that for the zero-mean random walks in the quarter plane, Equation \eqref{eq:asymptotic-DW} holds, where $V(i_0,j_0)$ is some non-explicit harmonic function. Let $f(i_0,j_0)$ be the unique harmonic function found in Section \ref{solving}.

\begin{cor}
\label{cor:exit-time}
There exists $\chi>0$ such that 
\begin{equation}
\label{eq:equality_harmonic_functions}
     V(i_0,j_0)=\chi  f(i_0,j_0),\qquad \forall(i_0,j_0)\in{\bf Z}_+^2.
\end{equation}
\end{cor}
%A consequence of this corollary is the following: consider the Brownian motion with the same covariance matrix as the random walk, and denote by $u$ the unique harmonic (for the Brownian motion) function positive inside of the quarter plane and zero on the boundary ($u$ is usually called the r\'eduite of the cone). It is proved in \cite[Lemma 13]{DW} that as $i_0,j_0\to\infty$, $V(i_0,j_0)\sim u(i_0,j_0)$. A consequence of our results is that this is also true if $i_0+j_0\to\infty$ without assuming that both $i_0\to\infty$ and $j_0\to\infty$.

The constant $\chi$ in Corollary \ref{cor:exit-time} can be made explicit, by evaluating the identity \eqref{eq:equality_harmonic_functions} either at $(i_0,j_0)=(1,1)$ or as $i_0+j_0\to\infty$.

The conformal mapping $w(x)$ is related to the first exit time from the cone, as $V(i_0,j_0)$ governs the asymptotic tail distribution of the latter (Corollary \ref{cor:exit-time}). The link seems actually to be  stronger, as the following remark holds: let $F(n)$ and $\theta$ be introduced in \eqref{eq:asymptotic-DW} and  \eqref{exp_theta}, respectively. It is proved in \cite[Theorem 1 and Example 2]{DW} that $F(n) = n^{-\pi/(2\theta)}$, and in \eqref{behavior_chi} that $w(x)=(c+o(1))/(1-x)^{\pi/\theta}$, with exactly the same angle $\theta$.

\subsection{Lattice path enumeration}

A point in the lattice ${\bf Z}_+^2$ has eight neighbors, diagonal neighbors included. Choosing a subset of the set of neighbors and considering it as the set of admissible steps of a walk in the lattice, we obtain one of $2^8$ possible models of walks in ${\bf Z}_+^2$, see Figure \ref{fig:counting_walks} for an example. Bousquet-M\'elou and Mishna \cite{BMM} showed that, after eliminating trivial cases, and also those which can be reduced to the walks in a half plane, there remain $79$ inherently different models. Recently, in \cite[Section 1.5]{DW}, the asymptotics of the number $N_n(x,y)$ of excursions (the number of walks starting at $x$ and ending at $y$ after $n$ steps) was obtained: 
\begin{equation*}
     N_n(x,y) = C V(x) V'(y) \rho^n n^{\alpha}(1+o(1)),\qquad n\to\infty,
\end{equation*}
where the constants $C$, $\rho$ and $\alpha$ are found explicitly, while $V(x)$ and $V'(y)$ are not. In fact, with the notations of Section \ref{explicit}, $V(x)$ is the harmonic function for the random walk $(X,Y)$, and $V'(y)$ is the harmonic function for the reversed process $-(X,Y)$. If the drift of the walk (see \ref{drift}) is zero, then the drift of the reversed walk is zero too, and the results of Section \ref{solving} provide expressions for both $V(x)$ and $V'(y)$. If the drift is non-zero, the harmonic functions have been found in \cite{KuRa}. They could also be obtained from Section~\ref{Extensions} below, where we present some extensions (for instance, to the non-zero drift case) of our approach and results.

\unitlength=1.1cm
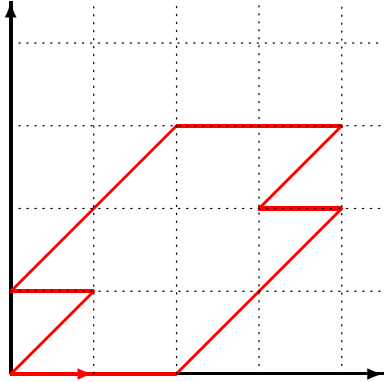
\begin{figure}[t]
        \begin{picture}(6,5.5)
    \thicklines
    \put(1,1){{\vector(1,0){4.5}}}
    \put(1,1){\vector(0,1){4.5}}
    \thinlines
    \Thicklines
    \put(1,1){\textcolor{red}{\vector(1,0){1}}}
    \put(2,1){\textcolor{red}{\line(1,0){1}}}
    \put(3,1){\textcolor{red}{\line(1,1){1}}}
    \put(4,2){\textcolor{red}{\line(1,1){1}}}
    \put(5,3){\textcolor{red}{\line(-1,0){1}}}
    \put(4,3){\textcolor{red}{\line(1,1){1}}}
    \put(5,4){\textcolor{red}{\line(-1,0){1}}}
    \put(4,4){\textcolor{red}{\line(-1,0){1}}}
    \put(3,4){\textcolor{red}{\line(-1,-1){1}}}
    \put(2,3){\textcolor{red}{\line(-1,-1){1}}}
    \put(1,2){\textcolor{red}{\line(1,0){1}}}
    \put(2,2){\textcolor{red}{\line(-1,-1){1}}}
    \linethickness{0.1mm}
    \thinlines
    \put(1,2){\dottedline{0.1}(0,0)(4.5,0)}
    \put(1,3){\dottedline{0.1}(0,0)(4.5,0)}
    \put(1,4){\dottedline{0.1}(0,0)(4.5,0)}
    \put(1,5){\dottedline{0.1}(0,0)(4.5,0)}
    \put(2,1){\dottedline{0.1}(0,0)(0,4.5)}
    \put(3,1){\dottedline{0.1}(0,0)(0,4.5)}
    \put(4,1){\dottedline{0.1}(0,0)(0,4.5)}
    \put(5,1){\dottedline{0.1}(0,0)(0,4.5)}
\end{picture}
  \vspace{-10mm}
\caption{Counting walks in the quarter plane, with the example of the so-called Gessel's step set $\{(1,0),(1,1),(-1,0),(-1,-1)\}$ (see \cite{BMM} for a general introduction to this topic)}
\label{fig:counting_walks}
\end{figure}

\section{Further properties of discrete harmonic functions}
\label{sec:further_properties}
\setcounter{equation}{0}

\subsection{Comparison between continuous and discrete harmonic functions}
In this section we compare the continuous and discrete harmonic functions. We first write down the expression of the unique continuous harmonic function. Then we illustrate that for some particular cases, the continuous and discrete harmonic functions are exactly the same. Finally we state a conjecture on the asymptotic behavior of discrete harmonic functions as $i_0+j_0\to\infty$.

\subsubsection*{Expression of the classical harmonic function}
The standard Laplacian is
\begin{equation*}
     \Delta=\frac{\partial^2}{\partial x^2}+\frac{\partial^2}{\partial y^2}.
\end{equation*}
It corresponds to a process with identity covariance matrix. With the covariance matrix 
\begin{equation*}
     \left(\begin{array}{ll}
     \sigma_{11}&\sigma_{12}\\
     \sigma_{21}&\sigma_{22}
     \end{array}\right)=
     \left(\begin{array}{ll}
     \textstyle\sum_{i,j}\displaystyle i^2p_{i,j}&\textstyle\sum_{i,j}\displaystyle ijp_{i,j}\\
     \textstyle\sum_{i,j}\displaystyle ijp_{i,j}&\textstyle\sum_{i,j}\displaystyle j^2p_{i,j}
     \end{array}\right)
\end{equation*}
(with $\sigma_{12}=\sigma_{21}$) of the random walk, the Laplacian becomes
\begin{equation*}
     L=\sigma_{11}\frac{\partial^2}{\partial x^2}+2\sigma_{12} \frac{\partial^2}{\partial x\partial y}+\sigma_{22} \frac{\partial^2}{\partial y^2}.
\end{equation*}
There is a unique function $u(x,y)$ which is positive harmonic in the interior of the quarter plane ($u>0$ and $Lu=0$) and equal to zero on the boundary \cite{0513885}. To write its expression, we need to introduce (with $\theta$ defined in \eqref{exp_theta})
\begin{equation}
\label{eq:change_variable}
     \phi (x,y)= \left(\frac{1}{\sin(\theta)}\frac{x}{\sqrt{\sigma_{11}}} - \frac{1}{\tan (\theta)} \frac{y}{\sqrt{\sigma_{22}}} , \frac{y}{\sqrt{\sigma_{22}}}\right).
\end{equation}
Note that $\phi({\bf R}_+^2)$ is the cone $\{\rho \exp(it): 0\leq \rho<  \infty, 0\leq t\leq \theta\}$, and that the random process $\phi(X,Y)$ has a covariance matrix equal to the identity, see \cite[Section 5]{AIM}. Define in polar coordinates $(x,y)=(\rho \cos(t),\rho\sin(t))$ the function $v(x,y)=\rho^{\pi/\theta}\sin(t\pi/\theta)$. Then the harmonic function is given by:
\begin{equation}
\label{eq:expression_continuous_harmonic_function}
     u(x,y)=v(\phi(x,y)).
\end{equation}
This function is called the r\'eduite of the cone \cite{0513885}. It is clearly homogeneous\footnote{A function $P(i_0,j_0)$ is homogeneous if there exists $\alpha\in{\bf R}_+$ such that $P(\lambda i_0,\lambda j_0) = \lambda^\alpha P(i_0,j_0)$ for any $i_0$, $j_0$ and $\lambda$.}. Another way to find an expression for $u(x,y)$ is proposed in Appendix \ref{sec:sandro}.

%When the cone is $\Lambda(0,\pi/n)$, the r\'eduite is equal to $h(\rho\exp(\imath \theta))=\rho^{n}\sin(n\theta)$.

%Let us conclude the introduction by outlining the link of $f_n$ with the harmonic functions of Brownian motion. Let
%     \begin{equation}
%     \label{def_trans}
%          \phi(x,y)=\big((x+y)/\sin(\pi/n),
%          y/\cos(\pi/n)\big).
%     \end{equation}
%Then the random walk $\phi(X,Y)$ has an identity covariance and takes its values in the cone $\Lambda(0,\pi/n)=\{t \exp(\imath \theta): 0\leq t\leq  \infty, 0\leq \theta\leq \pi/n\}$.  $\phi(X,Y)$ therefore lies in the domain of attraction of the standard Brownian motion killed at the boundary of $\Lambda(0,\pi/n)$.

\subsubsection*{On the equality of the continuous and discrete harmonic functions, and a conjecture on the asymptotic behavior of discrete harmonic functions}

As a consequence of the last paragraph, for the discrete harmonic function to be equal to the classical one, it should be homogeneous. This is the case for the simple random walk (Figure \ref{SRW}), for which $f(i_0,j_0)=i_0j_0$, and for the random walk related to $\mathfrak{sl}_{3}({\bf C})$ (Figure \ref{SecondRW}) as well, for which $f(i_0,j_0)=i_0j_0(i_0+j_0)/2$. In that cases the continuous and discrete functions coincide, using \eqref{eq:expression_continuous_harmonic_function}. This remark also holds for the random walk on Figure \ref{Third example} when $n\in\{3,4\}$. Indeed, it is proved in \cite{RaSp4} that the functions 
\begin{align*}
     f(i_{0},j_{0})&=i_{0} j_{0} (i_{0}+2j_{0}),\\
     f(i_{0},j_{0})&=i_{0} j_{0} (i_{0}+2j_{0}) (i_{0}+j_{0}),
\end{align*}
are the harmonic functions when $n=3$ and $n=4$, respectively. On the other hand, it is shown in \cite{RaSp4} that the harmonic function for the random walk on Figure \ref{Third example} is not homogeneous as soon as $n\geq 5$. As an illustration, for $n=6$, the harmonic function is
    \begin{equation*}
          f(i_{0},j_{0})=i_{0} j_{0}(i_{0}+2j_{0})(i_{0}+j_{0})((i_{0}+2j_{0}/3)(i_{0}+4j_{0}/3)+10/9).
     \end{equation*}
However, for the above example, and in fact in many examples, one has (with $u$ as in \eqref{eq:expression_continuous_harmonic_function})
\begin{equation}
\label{eq:asymptotic_equality}
     f(i_0,j_0)=u(i_0,j_0)(1+o(1)),\qquad i_0+j_0\to\infty.
\end{equation}
We have the following conjecture:

\begin{conj}
\label{conj:limit}
Equation \eqref{eq:asymptotic_equality} holds for all models of random walks in the quarter plane with square integrable increments.
\end{conj}
The equality \eqref{eq:asymptotic_equality} is obvious when the discrete harmonic function is homogeneous. Further, it is proved for all random walks of Figure \ref{Third example} in \cite{RaSp4}. We can prove the conjecture for all small step random walks in the quarter plane for $i_0=1$ (or $j_0=1$), using our main result (Theorem \ref{eq:l}), the behavior \eqref{behavior_chi} of the conformal mapping near $1$, and classical Tauberian theorems. Moreover, Corollary \ref{cor:exit-time} together with \cite[Lemma 13]{DW} imply that Conjecture \ref{conj:limit} holds for all small step random walks in the quarter plane.

\subsection{Group of the walk and polynomial harmonic functions}

In this section we relate some properties of harmonic functions to the group of the walk, a notion introduced by Malyshev \cite{MAL}. First we state a conjecture (Conjecture \ref{conj:equality_group}) on the homogeneity of harmonic functions in terms of the group of the walk. Then we demonstrate a result (Proposition~\ref{prop:nature-H(x,y)}) on the algebraic nature of the generating function $H(x,y)$ in terms of the group. First of all we have to properly introduce the notion of group.

\subsubsection*{Definition of the group (first version)}
Define $Q(x,y)=x^2y^2L(1/x,1/y)$, with $L$ as in \eqref{eq:def_L}, and introduce 
     \begin{equation*}
     \label{def_generators_group}
          \xi(x,y)= \left(x,\frac{1}{y}\frac{\sum_{-1\leq i \leq 1}p_{i,-1}x^{i}}
          {\sum_{-1\leq i \leq 1}p_{i,+1}x^{i}}\right),
          \qquad \eta(x,y)=\left(\frac{1}{x}\frac{\sum_{-1\leq j\leq 1}p_{-1,j}y^{j}}
          {\sum_{-1\leq j\leq 1}p_{+1,j}y^{j}},y\right).
     \end{equation*}
The functions $\xi$ and $\eta$ are transformations of ${\bf C}^2$. It is quite easy to see that if $Q(x,y)=0$, then $Q(\xi(x,y))=Q(\eta(x,y)) = 0$. Though this is not the original definition of Malyshev, we define the group of the walk (denoted by $W_1$) as the group $\langle \xi,\eta\rangle$ of transformations of ${\bf C}^2$. The transformations $\xi$ and $\eta$ are clearly involutions, so that the group $\langle \xi,\eta\rangle$ is a dihedral group. It can be finite or infinite, depending on the order of the element $\xi\circ\eta$ (or $\eta\circ\xi$). We now give some examples. For the simple random walk we have
\begin{equation*}
     \xi(x,y)= \left(x,\frac{1}{y}\right),\qquad \eta(x,y)=\left(\frac{1}{x},y\right),
\end{equation*}
in such a way that the group $\langle \xi,\eta\rangle$ is equal to $\{1,\xi,\eta,\xi\circ\eta=\eta\circ\xi\}$, it is of order $4$. A simple verification shows that for the random walks on Figure \ref{SecondRW} and on Figure \ref{Third example} for $n\in\{3,4\}$, the group is finite, or order $6$, $6$ and $8$, respectively. Our conjecture (which is supported by the above examples) is the following:
\begin{conj}
\label{conj:equality_group}
If the discrete and continuous harmonic functions coincide then the group $W_1$ is finite.
\end{conj}

\subsubsection*{Nature of the generating function $H(x,y)$} Let $\theta$ be defined in \eqref{exp_theta}. We have the following link between the nature (rational, algebraic, non-algebraic) of $H(x,y)$ and $\theta$:

\begin{prop}
\label{prop:nature-H(x,y)}
The generating function $H(x,y)$ is rational {\rm(}resp.\ algebraic non-rational, non-algebraic{\rm)} if and only if $\pi/\theta$ belongs to ${\bf Z}$ {\rm(}resp.\ ${\bf Q}$, ${\bf R}\setminus {\bf Q}${\rm)}.
\end{prop}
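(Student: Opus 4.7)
The plan is to reduce the nature question on $H(x,y)$ to that of the conformal mapping $w(x)$ from Section \ref{mapping}, and then read it off from the explicit expression \eqref{def_CGF_u-alternative} together with the asymptotic \eqref{behavior_chi}.

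First I would invoke the functional equation \eqref{eq:functional_equation}, which gives
\[
H(x,y) \;=\; \frac{L(x,0)H(x,0) + L(0,y)H(0,y) - L(0,0)H(0,0)}{L(x,y)}.
\]
By Theorem \ref{thm:main-harmonic} and its $(x\leftrightarrow y)$ counterpart for $H(0,y)$, the products $L(x,0)H(x,0)$ and $L(0,y)H(0,y)$ are affine in $w(x)$ and $\widetilde w(y)$ respectively. Since $L(x,y)$ is a polynomial and since algebraic (resp.\ rational) functions form a field, $H(x,y)$ is algebraic over ${\bf C}(x,y)$ (resp.\ lies in ${\bf C}(x,y)$) if and only if both $w(x)$ and $\widetilde w(y)$ are algebraic (resp.\ rational) over ${\bf C}(x)$ and ${\bf C}(y)$. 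The converse direction is handled by specialization at $y=0$ (resp.\ $x=0$): restricting an algebraic or rational bivariate function yields an algebraic or rational univariate one. As the formula \eqref{exp_theta} for $\theta$ is invariant under swapping $i\leftrightarrow j$, the same angle $\theta$ governs the corner expansion of $\widetilde w$ at $1$, so it is enough to decide the nature of $w(x)$.

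Next I would exploit the explicit expression \eqref{def_CGF_u-alternative},
\[
w(x) \;=\; -\tfrac14\bigl[\alpha(x)^{2\pi/\theta} - 2 + \alpha(x)^{-2\pi/\theta}\bigr],\qquad \alpha(x) = T(x) + \sqrt{T(x)^2 - 1},
\]
together with the observation that $T(x)^2$ is a rational function of $x$ by \eqref{def_f}, so $\alpha(x)$ is algebraic over ${\bf C}(x)$. The necessary conditions then follow from \eqref{behavior_chi}: $w(x) \sim c\,(1-x)^{-\pi/\theta}$ as $x\to 1$, with $c\neq 0$. A rational function has only integer-order poles, so $w \in {\bf C}(x)$ forces $\pi/\theta \in {\bf Z}$. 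More generally, by Puiseux's theorem any algebraic function admits near $x=1$ a Laurent--Puiseux expansion whose exponents lie in $\tfrac1d{\bf Z}$ for some positive integer $d$ (the local ramification index); the non-vanishing of $c$ guarantees that the leading exponent is exactly $-\pi/\theta$, so $w$ algebraic forces $\pi/\theta \in {\bf Q}$.

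For the sufficient conditions, writing formally $T = \cos\phi$ yields $\alpha^{2\pi/\theta} + \alpha^{-2\pi/\theta} = 2\cos(2\pi\phi/\theta)$. If $\pi/\theta = p/q \in {\bf Q}$, then $\alpha^{2\pi/\theta}$ is a $q$-th root of the algebraic quantity $\alpha^{2p}$, hence algebraic, and so is $w$. If moreover $\pi/\theta = n \in {\bf Z}_+$, the identity $2\cos(2n\phi) = 2\,T_n(2\cos^2\phi - 1) = 2\,T_n\bigl(2T(x)^2 - 1\bigr)$, with $T_n$ the $n$-th Chebyshev polynomial of the first kind, expresses $w(x)$ as a polynomial in $T(x)^2 \in {\bf C}(x)$, so $w \in {\bf C}(x)$. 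The main obstacle I expect is the careful application of Puiseux's theorem in the ``only if'' direction: one must use the \emph{exact} leading exponent $-\pi/\theta$ of $w$ at $1$ (which is legitimate precisely because $c \neq 0$), and rule out any cancellation that could lower the singular order to an unexpectedly small value.
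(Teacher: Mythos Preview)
Your proposal is correct and follows essentially the same route as the paper: reduce via \eqref{eq:functional_equation} and Theorem~\ref{thm:main-harmonic} to the nature of $w(x)$, then use the explicit form \eqref{def_CGF_u-alternative} for the sufficiency and the asymptotic \eqref{behavior_chi} for the necessity. Your version is more explicit than the paper's---you spell out the Chebyshev identity for the rational case and invoke Puiseux's theorem for the ``only if'' direction, whereas the paper simply says these follow ``easily'' from \eqref{def_CGF_u-alternative} and as a ``corollary'' of \eqref{behavior_chi}---but the underlying argument is the same.
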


\begin{proof}
From the functional equation \eqref{eq:functional_equation} and by symmetry, it suffices to prove the above statement for $H(x,0)$. In fact, thanks to Theorem \ref{thm:main-harmonic}, it is enough to prove it for $w(x)$. If $\pi/\theta$ belongs to ${\bf Z}$ or ${\bf Q}$, Proposition \ref{prop:nature-H(x,y)} easily follows from \eqref{def_CGF_u-alternative}. If $\pi/\theta\in{\bf R}\setminus {\bf Q}$, then it is a corollary of \eqref{behavior_chi} that $w(x)$ cannot be an algebraic function.
\end{proof}

A direct consequence of Proposition \ref{prop:nature-H(x,y)} is the following:
\begin{cor}
The function $f(i_0,j_0)$ is polynomial in $i_0,j_0$ if and only if $\pi/\theta\in {\bf Z}$.
\end{cor}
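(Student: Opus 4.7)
The statement is an equivalence, and I would prove the two directions separately.

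The direction ``$f$ polynomial $\Rightarrow \pi/\theta\in\mathbf{Z}$'' is immediate. If $f(i_0,j_0)=P(i_0,j_0)$ for some polynomial $P$, write $P$ as a linear combination of monomials $i_0^a j_0^b$. Each generating series $\sum_{i_0,j_0\geq 1} i_0^a j_0^b\, x^{i_0-1}y^{j_0-1}$ is a product of one-variable rational functions with poles only at $x=1$ and $y=1$ (obtained by applying $(x\partial_x)^a, (y\partial_y)^b$ to the geometric series $1/((1-x)(1-y))$). Hence $H(x,y)$ is rational, and Proposition \ref{prop:nature-H(x,y)} forces $\pi/\theta\in\mathbf{Z}$.

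For the converse $\pi/\theta\in\mathbf{Z}\Rightarrow f$ polynomial, the strategy is to show that $H(x,y)$ has the specific shape $Q(x,y)/((1-x)^a(1-y)^b)$ with $Q$ a polynomial, which characterizes (up to a finite transient) generating series of polynomial double sequences. I would proceed in three steps. (1) By Proposition \ref{prop:nature-H(x,y)} combined with the explicit formulas \eqref{def_CGF_u}--\eqref{def_CGF_u-alternative}, $w(x)$ is rational; more precisely, writing $n=2\pi/\theta\in 2\mathbf{Z}$, formula \eqref{def_CGF_u-alternative} exhibits $w(x)$ as a polynomial in $T(x)^2$, hence as a rational function of $x$ whose only singularity in a neighborhood of $[0,1]$ is the pole of order $\pi/\theta$ at $x=1$ described by \ref{W-1}. (2) Since $L(x,0)$ is a polynomial of degree at most $2$ and $H(x,0)=\mu(w(x)+\nu)/L(x,0)$ is analytic in the open unit disc by Lemma \ref{lemma:radius_convergence_zero_drift}, the zeros of $L(x,0)$ inside $|x|\leq 1$ are cancelled by zeros of $w(x)+\nu$. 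The resulting rational function $H(x,0)$ therefore has its unique pole at $x=1$, so $f(i_0,1)$ is polynomial in $i_0$; the symmetric argument applied to $H(0,y)$ gives $f(1,j_0)$ polynomial in $j_0$.

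(3) To conclude, one uses the functional equation \eqref{eq:functional_equation}:
\[
   H(x,y)=\frac{L(x,0)H(x,0)+L(0,y)H(0,y)-L(0,0)H(0,0)}{L(x,y)}=\frac{\mu(w(x)+\nu)+\widetilde\mu(\widetilde w(y)+\widetilde\nu)-L(0,0)H(0,0)}{L(x,y)}.
\]
The numerator is a sum of a rational function of $x$ alone (with pole only at $x=1$) and a rational function of $y$ alone (with pole only at $y=1$). The hard step is to show that the bivariate denominator $L(x,y)$ does not introduce spurious singularities: even though $L(x,y)$ vanishes on the genuinely two-dimensional kernel curve, the algebraic relations it satisfies on this curve (namely, that $L(x,0)H(x,0)+L(0,y)H(0,y)-L(0,0)H(0,0)$ also vanishes on $\{(X_0(y),y)\}$ and $\{(x,Y_0(x))\}$) exactly force $L(x,y)$ to divide the numerator in the appropriate sense. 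After clearing this cancellation, what remains in the denominator are only powers of $(1-x)$ and $(1-y)$, yielding polynomiality of $f(i_0,j_0)$.

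The main obstacle is step (3): the passage from ``singularities only at $x=1$ and $y=1$ for $H(x,0)$ and $H(0,y)$'' to ``singularities only along $\{x=1\}\cup\{y=1\}$ for $H(x,y)$''. I would handle it via the preceding divisibility analysis, supplemented if needed by the uniqueness of $f$ (Theorem \ref{thm:MB}) together with positivity of the Taylor coefficients, which by a Pringsheim-type argument localizes the dominant singular set of $H(x,y)$ on the real positive quadrant.
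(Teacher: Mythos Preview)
The paper records this corollary as ``a direct consequence of Proposition~\ref{prop:nature-H(x,y)}'' and gives no further argument; your plan therefore fills in details the paper omits, and the overall strategy---reducing to $H(x,y)=M(x,y)/[(1-x)^n(1-y)^n]$ via the functional equation and a divisibility argument---is the right one.

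Two adjustments are needed. First, step~(1) should assert the \emph{global} statement, not merely ``in a neighbourhood of $[0,1]$''. When $\pi/\theta=n\in\mathbf{Z}$, formula~\eqref{def_CGF_u-alternative} rewrites as $w(x)=-\tfrac12\bigl(T_{2n}(T)-1\bigr)$ with $T_{2n}$ the (even) Chebyshev polynomial, so $w$ is a degree-$n$ polynomial in $T(x)^2$. Now $T^2$ is a M\"obius function of $x$ whose \emph{only} pole lies at $x=1$: from the factorisation $\delta(x)=C(x-x_1)(x-1)^2(x-x_4)$ one checks directly that $f(1)=\delta''(1)/6$, so $1/3-2f(1)/\delta''(1)=0$. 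Hence $w$ has a single pole in all of $\mathbf{C}$, of order $n$ at $x=1$. You tacitly use this global version in step~(3) when you say the numerator has ``pole only at $x=1$''.

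Second, step~(2) is both superfluous and, as written, incomplete: cancelling the zeros of $L(x,0)=p_{-1,1}x^2+p_{0,1}x+p_{1,1}$ that lie in $|x|\leq 1$ does not rule out a root with $|x|>1$, so you cannot yet conclude that $H(x,0)$ has its \emph{unique} pole at $x=1$. Fortunately step~(3) does not need this. With the global step~(1), the quantity $N(x,y):=(1-x)^n(1-y)^n\,L(x,y)H(x,y)$ is a genuine polynomial; since $H$ is analytic on the open bidisc and the curve $\{L=0\}$ meets it (e.g.\ along $(X_0(y),y)$ for real $y$ close to $1$), $N$ vanishes on a Zariski-dense subset of $\{L=0\}$. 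Here $L$ is irreducible in $\mathbf{C}[x,y]$ because its $y$-discriminant $\delta(x)$ contains the factor $(x-x_1)(x-x_4)$ with $x_1\neq x_4$ and hence is not a perfect square; so $L\mid N$, and $H(x,y)=M(x,y)/[(1-x)^n(1-y)^n]$ with $M$ polynomial, which gives the polynomiality of $f(i_0,j_0)$.
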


However, it is not simple to check whether $\pi/\theta$ is rational or not; this is why we propose the following alternative way to find the (ir)rationality of $\pi/\theta$, in terms of the group of the walk.

\subsubsection*{Definition of the group (second version)}
We have already remarked that if $Q(x,y)=0$, then $Q(\xi(x,y))=Q(\eta(x,y)) = 0$. In particular, $\xi$ and $\eta$ in \eqref{def_generators_group} can be interpreted not only as transformations of ${\bf C}^2$, but also as automorphisms of the surface
\begin{equation*}
     \{(x,y)\in ({\bf C}\cup \{\infty\})^2: Q(x,y)=0\}.
\end{equation*}
Our second definition of group of the walk (the original one, see \cite{MAL} and \cite[Section 2.4]{FIM}) is then precisely the automorphism group $W_2=\langle \xi,\eta\rangle$ generated by $\xi$ and $\eta$. The difference between $W_1$ and $W_2$ is not only of a formal character, and in general the notions of order and finiteness do not coincide in the respective approaches. However, it is remarked in \cite[Section 2.1]{FR1} that the order of $W_1$ is larger than (or equal to) the order of $W_2$.

The main point is that under \ref{small_jumps}, \ref{non_degenerate} and \ref{drift}, the group $W_2$ is finite if and only if $\pi/\theta\in{\bf Q}$, see \cite{FR2}. In other words, we can reformulate Proposition \ref{prop:nature-H(x,y)} by replacing the hypothesis that $\pi/\theta\in{\bf Q}$ (resp.\ $\pi/\theta\in{\bf R}\setminus {\bf Q}$) by the assumption that the group $W_2$ is finite (resp.\ infinite). The advantage of this reformulation is that in many examples \cite{BMM,FIM,KuRa,RaSp4}, the order of $W_2$ is already computed.

\subsubsection*{Example 1 (continued)}  For the simple random walk, we saw that $\pi/\theta =2$; therefore, the group $W_2$ must be finite. In fact $W_1$ has order $4$, so that $W_2$ is finite, of order $4$. Note that the order of the group $W$ is twice $\pi/\theta$, which is not surprising: in general, the order of $W$ is $2\inf\{p\in{\bf Z}_+\setminus \{0\}: p \theta/\pi \in {\bf Z}\}$.

\section{Extensions}
\label{Extensions}
\setcounter{equation}{0}

Throughout this paper, we mostly assumed that the random walks had zero drift \ref{drift}. However, our approach can also be applied to obtain the Martin boundary in the non-zero drift case (Section \ref{subsec:SRWNZD}), and to compute $t$-harmonic functions (Section \ref{sec:thf}); we also say a word on a possible generalization to random walks with more general jumps (Section \ref{sec:mjj}).

\subsection{Random walks with non-zero drift}
\label{subsec:SRWNZD}

In this section, we consider random walks satisfying \ref{small_jumps} and \ref{non_degenerate} (but not \ref{drift}). We first recall the existing results in the literature on Martin boundary \cite{IRL,KuRa}. Then we show how the approach in this paper allows us to retrieve these results. Finally, we present an interesting phenomenon on the convergence of the harmonic functions of the non-zero drift case to the unique harmonic function in the zero drift case. In this section we focus on ideas, and therefore just sketch the proofs. %We illustrate all three paragraphs by the example of the simple random walk. 

\subsubsection*{Existing results}
For random walks with non-zero drift killed at the boundary of the quarter plane, the minimal Martin boundary (see Section \ref{subsec:MB} for a brief account on Martin boundary theory) is homeomorphic to $[0,\pi/2]$, see \cite{IRL,KuRa}. In concrete terms, the set of all minimal harmonic functions can be parametrized (in a one-to-one way) by $\gamma\in[0,\pi/2]$. Further, an expression for the harmonic functions $f_\gamma$ is given in \cite{KuRa}, in terms of solutions to certain boundary value problems. While this expression is typically complex, it may admit nice simplifications (in fact, when the group $W_2$ introduced in Section \ref{sec:further_properties} is finite, see \cite{KuRa}).

%It is worth remarking that for a random walk having a non-zero drift (and satisfying \ref{small_jumps} and \ref{non_degenerate}), Lemma \ref{lemma:radius_convergence_zero_drift} does not hold anymore. It is precisely this phenomenon which will eventually entail that the Martin boundary of a random walk with a non-zero drift is composed of infinitely many harmonic functions \cite{IRL,KuRa}; see Section \ref{subsec:SRWNZD} for more details.

%For instance, for the simple random walk, we have (see \cite[Section 5.1]{KuRa})
%\begin{equation}
%\label{eq:expression_harmonic_coefficients}
%     f_\gamma (i_0,j_0)=\left\{\begin{array}{lll}
%     (s_+(0)^{i_0}-s_-(0)^{i_0})j_0\widetilde{s}_+(0)^{j_0}& \text{if} & \gamma = 0,\\
%     
%     (s_+(\gamma)^{i_0}-s_-(\gamma)^{i_0})(\widetilde{s}_+(\gamma)^{j_0}-\widetilde{s}_-(\gamma)^{j_0})
%     
%     & \text{if} & \gamma\in(0,\pi/2),\\
%     i_0s_+(\pi/2)^{i_0}(\widetilde{s}_+(\pi/2)^{j_0}-\widetilde{s}_-(\pi/2)^{j_0})& \text{if} & \gamma = \pi/2,
%     \end{array}\right.
%\end{equation}
%where
%\begin{align*}
%     r(\gamma) &= \frac{1-\sqrt{1-(1-\tan^2 \gamma)(1-4p_{0,-1}p_{0,1}+4p_{-1,0}p_{1,0}\tan^2 \gamma)}}{2p_{1,0}(1-\tan^2 \gamma)},\\
%     \widetilde r(\gamma) &= \frac{1-\sqrt{1-(1-\tan^{-2} \gamma)(1-4p_{-1,0}p_{1,0}+4p_{0,-1}p_{0,1}\tan^{-2} \gamma)}}{2p_{1,0}(1-\tan^{-2} \gamma)},
%\end{align*}
%and
%\begin{equation*}
%     s_\pm(\gamma) = \textstyle r(\gamma)\pm\sqrt{r(\gamma)^2-p_{-1,0}/p_{1,0}},
%     \qquad
%     \widetilde s_\pm(\gamma) = \textstyle \widetilde r(\gamma)\pm\sqrt{\widetilde r(\gamma)^2-p_{0,-1}/p_{0,1}}.
%\end{equation*}

\subsubsection*{Retrieving the existing results} In this paragraph, we still consider non-zero drift random walks, and we explain how our methods provide the Martin boundary, and even expressions for harmonic functions. In the zero drift case, the point $1$ plays a crucial role, in particular for the boundary value problem of Section \ref{ASFE}, see \ref{class_function}. This is a consequence of the radius of convergence (equal to $1$) of $H(x,0)$---or equivalently of the non-exponential growth of $f(i_0,1)$ as $i_0\to\infty$. For non-zero drift walks, the growth of $f(i_0,1)$ is now exponential. In fact, the set of admissible $p$ could be characterized, say $p\in I$, where $I\subset (1,\infty)$ is some segment.
%with $p\in[x_2,X(y_2)]$\footnote{In the zero drift case, we have $x_2=y_2=X(y_2)=1$. In the non-zero drift case, the points $x_2$ and $y_2$ are defined as in Section \ref{Notations}, by replacing the inequality \eqref{properties_branch_points} by the following: $\vert x_1\vert \leq x_2\leq x_3\leq \vert x_4\vert$ and $\vert y_1\vert \leq y_2\leq y_3\leq \vert y_4\vert$.}, see \eqref{eq:expression_harmonic_coefficients} for the case of the simple random walk.  

Instead of one single Riemann boundary value problem (Lemma \ref{lem:BPV}), we would have infinitely many boundary value problems, parametrized by $p$, as follows:
     \begin{enumerate}[label={\rm($\widehat{\text{\roman{*}}})$},ref={($\widehat{\text{\roman{*}}}$)}]
          \item \label{class_function-2} $H(x,0)$ is analytic in $\mathscr{G}_X$, except at $p\in I$, where it has a pole\footnote{The pole is always simple, except if $p$ lies on the boundary of the domain $\mathscr{G}_X$, in which case the pole is double.\label{footnote:pole}}; 
          \item \label{class_function_2-2} $H(x,0)$ is continuous on $\overline{\mathscr{G}_X}\setminus \{p\}$;
          \item \label{boundary_condition-2} For all $x$ on the boundary of $\mathscr{G}_X$,
          $
               L(x,0)H(x,0)-L(\overline{x},0)H(\overline{x},0)=0.
          $
     \end{enumerate}
The resolution of \ref{class_function-2}, \ref{class_function_2-2} and \ref{boundary_condition-2} (that we shall not tackle here) would use an analogue of the conformal mapping $w$, namely, a function such that:
\begin{enumerate}[label=($\widehat{{\rm W}}$\arabic{*}),ref={\rm ($\widehat{{\rm W}}$\arabic{*})}]
     \item\label{W-boundary-2}For all $x$ on the boundary of $\mathscr{G}_X$, $w(x)=w(\overline{x})$;
     \item\label{W-conformal-2}$w$ is injective in $\mathscr{G}_X$;
     \item\label{W-analytic-2} $w$ is analytic on $\mathscr{G}_X$;
     \item\label{W-continuous-2} $w$ is continuous on $\overline{\mathscr{G}_X}\setminus \{p\}$ and $w( p )=\infty$.
\end{enumerate}
At this point, we wish to emphasize the following main difference between the zero drift case and the non-zero drift case: the expression of $w(x)$ becomes more elaborate (more precisely, in place of sine and arcsine functions, it now involves two $\wp$-Weierstrass functions with different periods\footnote{This is due to the fact that a certain Riemann surface---namely, $\{(x,y)\in({\bf C}\cup\{\infty\})^2: L(x,y)=0\}$---has genus $0$ in the zero drift case and genus $1$ in the non-zero drift case.}). Despite this intrinsic complexity, expressions for $w(x)$ are available in the literature: see \cite[Section 3.1]{KuRa} for a particular value of $p$, and \cite[Section 2]{Ra} for simplifications of $w(x)$ in some particular cases. Once $w(x)$ is known, Equation \eqref{eq:l} holds, and an explicit expression for $H(x,0)$ follows. 

In some sense, this approach is more efficient than the one used in \cite{KuRa}, since we give here an expression for the harmonic functions directly in terms of the function $w(x)$, and not in terms of solutions to some other boundary value problems.

%For the simple random walk, $\mathscr G_X$ is the circle with center at $0$ and radius $\sqrt{p_{1,0}/p_{-1,0}}$. The function 
%\begin{equation*}
%     w(x) = \frac{x}{(x-p)(x-(p_{1,0}/p_{-1,0})/p)}
%\end{equation*}    
%therefore satisfies \ref{W-analytic-2}--\ref{W-boundary-2}, and with \eqref{eq:l} we conclude that
%\begin{equation*}
%     H(x,0) = \frac{w(x)}{L(x,0)} = \frac{1/p_{0,1}}{(x-p)(x-(p_{1,0}/p_{-1,0})/p)} = \frac{(1/p_{0,1})(p_{-1,0}/p_{1,0})}{(1-s_+(\gamma)x)(1-s_-(\gamma)x)},
%\end{equation*}
%provided that $\gamma$ is defined by $1/s_+(\gamma)=p$. The latter equation has a solution if and only if $p\in[x_2,X(y_2)=\sqrt{p_{1,0}/p_{-1,0}}]$. Then, using the functional equation \eqref{eq:functional_equation}, this forces the pole of $H(0,y)$ in $\mathscr G_Y$ to be $Y_0(  p)$, that is, $1/\widetilde s_+(\gamma)$.

\subsubsection*{Convergence of the harmonic functions of the non-zero drift case to the harmonic function in the zero drift case}
Consider a random walk without drift, and with jumps $\{p_{i,j}\}_{-1\leq i,j\leq 1}$. Denote by $f$ its unique harmonic function.  Let us introduce any family of random walks with non-zero drift and jumps $\{q_{i,j}^n\}_{-1\leq i,j\leq 1}$, such that for all $i$ and $j$, $\lim_{n\to\infty}q_{i,j}^n = p_{i,j}$. For any $n$, there exist infinitely many harmonic functions $f_\gamma^n$, parametrized by $\gamma\in[0,\pi/2]$. We normalize all these functions by requiring that $f(1,1)=f_\gamma^n(1,1)=1$. The main result in this paragraph is the following convergence result: for any $i_0,j_0$ and any $\gamma$, we have 
\begin{equation}
\label{eq:convergence-harmonic-functions}
     \lim_{n\to\infty}f_\gamma^n(i_0,j_0) = f(i_0,j_0).
\end{equation}
We now mention two open problems concerning \eqref{eq:convergence-harmonic-functions}:
\begin{itemize}
     \item The proof that we could write for \eqref{eq:convergence-harmonic-functions} is purely analytical. It is an open problem to decide whether this can be obtained by using only general analysis of harmonic functions.
     \item The convergence \eqref{eq:convergence-harmonic-functions} to the same harmonic function $f(i_0,j_0)$ can be proved independently of knowing that in the zero drift case, there exists a unique harmonic function. It is an open problem to prove that the convergence \eqref{eq:convergence-harmonic-functions} implies the uniqueness of the harmonic function in the zero drift case.
\end{itemize}

%Let us now illustrate \eqref{eq:convergence-harmonic-functions} with the simple random walk, for $\gamma\in(0,\pi/2)$. With \eqref{eq:expression_harmonic_coefficients}, the function $H_\gamma (x,y)=\sum_{i_0,j_0\geq 1} f_\gamma(i_0,j_0)x^{i_0-1}y^{j_0-1} $ equals
%\begin{equation*}
%     \frac{(s_+(\gamma)-s_-(\gamma))(\widetilde s_+(\gamma)-\widetilde s_-(\gamma))}{(1-x s_+(\gamma))(1-x s_-(\gamma))(1-y\widetilde s_+(\gamma))(1-y\widetilde s_-(\gamma))}.
%\end{equation*}
%As the drift goes to $0$, the four points $s_+(\gamma),s_-(\gamma),\widetilde s-_+(\gamma),\widetilde s_-(\gamma)$ all converge to $1$, so that dividing by the constant $(s_+(\gamma)-s_-(\gamma))(\widetilde s_+(\gamma)-\widetilde s_-(\gamma))$, the generating function $H_\gamma(x,y)$ goes to $1/[(1-x)^2(1-y)^2]$, which indeed is the generating function for the zero-drift case, see Section \ref{AFFE}. The convergence of the generating functions yields the convergence of its coefficients.

\subsection{$t$-harmonic functions}
\label{sec:thf}
In this work, we have determined the harmonic functions, that is, the $t$-harmonic functions (satisfying $P f = t f$) for $t=1$. It is also interesting to consider $t$-harmonic functions other values of $t$ (many motivations are given in \cite[Chapter~2]{Woess}). It turns out that our approach can be extended to find the latter: indeed, the key functional equation \eqref{eq:functional_equation} still holds, if now
     \begin{equation*}
          L(x,y)=L(x,y;t)= x y[ \textstyle\sum_{-1\leq i,j\leq 1}p_{i,j }x^{-i} y^{-j}  -t].
     \end{equation*}
Accordingly, the methods presented in this article can be used to find $t$-harmonic functions, with, nevertheless, the following additional difficulty concerning the conformal mapping $w(x)$: mutatis mutandis, as for the non-zero drift case (see Section \ref{subsec:SRWNZD}), introducing the variable $t$ would have the effect of involving $\wp$-Weierstrass functions instead of sine and arcsin functions in the expression of the crucial conformal mapping $w(x)$.

\subsection{More general jumps}
\label{sec:mjj}
In this part we consider random walks in the quarter plane, that do not satisfy the small step hypothesis \ref{small_jumps}. Let us only assume that it has bounded jumps, which means that there exist $I^\pm$ and $J^\pm$ such that $p_{i,j} = 0$ if $i>I^+$, $i<I-$, $j>J^+$ or $j<J^-$. We are interested in harmonic functions satisfying \ref{property_harmonicity}, \ref{property_zero}\footnote{In fact, \ref{property_zero} has to be replaced by $f(i_0,j_0)=0$ if $i_0\leq 0$ or $j_0\leq 0$.} and \ref{property_positive}. Let us define the sectional generating functions
\begin{equation*}
     F^{j_0}(x) = \textstyle\sum_{i_0\geq 1} f(i_0,j_0) x^{i_0-1},
     \qquad
     G^{i_0}(y) = \textstyle\sum_{j_0\geq 1} f(i_0,j_0) y^{j_0-1}.
\end{equation*}
The following functional equation then holds:
\begin{align}
\label{eq:functional_equation_general}
     L(x,y)H(x,y) &=\textstyle \sum_{1\leq j_0\leq J^+} F^{j_0}(x)y^{j_0-1} \sum_{\substack{I^-\leq i\leq I^+\\j_0\leq j \leq J^+}}p_{i,j}x^{-(i-1)}y^{-(j-1)}\\&+\textstyle\sum_{1\leq i_0\leq I^+} G^{i_0}(y)x^{i_0-1} \sum_{\substack{i_0\leq i \leq I^+\\ J^-\leq j\leq J^+}}p_{i,j}x^{-(i-1)}y^{-(j-1)}\nonumber\\&\textstyle-\sum_{\substack{1\leq i_0\leq I^+\\1\leq j_0\leq J^+}}f(i_0,j_0)x^{i_0-1}y^{j_0-1} \sum_{\substack{i_0\leq i \leq I^+\\ j_0\leq j\leq J^+}}p_{i,j}x^{-(i-1)}y^{-(j-1)}.\nonumber
\end{align}
This equation is far more complicated than \eqref{eq:functional_equation}, because instead of $2$ unknowns (namely, $H(x,0)$ and $H(0,y)$) in the right part of \eqref{eq:functional_equation}, there are now $I^++J^+$ unknowns (namely, $F^{j_0}(x)$ for $1\leq j_0\leq J^+$ and $G^{i_0}(y)$ for $1\leq i_0\leq I^+$) in the RHS of \eqref{eq:functional_equation_general}. To our knowledge, it is an open problem to solve \eqref{eq:functional_equation_general} in general.

However, if we restrict ourselves to $I^+ = J^+=1$ (with $I^-$ and $J^-$ arbitrarily large), then \eqref{eq:functional_equation_general} is nothing else but \eqref{eq:functional_equation}, as $F^1(x) = H(x,0)$ and $G^1(y) = H(0,y)$. In particular, the approach developed in the present paper can be applied for any such random walk, provided that a conformal mapping $w(x)$ can be found: if $I^-$ or $J^-$ are different from $-1$, then the kernel $L(x,y)$ becomes more complicated, and it is an open problem to construct a suitable conformal mapping $w(x)$ with properties \ref{W-boundary}--\ref{W-continuous}.

\appendix

\section{Finding harmonic functions for Brownian motion via functional equations (By Sandro Franceschi)}
\label{sec:sandro}
\setcounter{equation}{0}

In this appendix we generalize to the continuous case the approach presented in this article. We are interested in finding harmonic functions associated to the Brownian motion in the quarter plane ${\bf R}_+^2$ with covariance matrix 
$$\Sigma =  \begin{pmatrix}
\sigma_{11} & \sigma_{12} \\
\sigma_{12} & \sigma_{22}
\end{pmatrix}.$$ 
The latter are the functions which are positive within the quarter plane, which vanish on the boundary axes, and which cancel the generator
$$\mathcal{G} f = \frac{1}{2} 
\left(
\sigma_{11} \frac{\partial^2 f}{\partial x^2} + 
2 \sigma_{12} \frac{\partial^2 f}{\partial x \partial y}
+ \sigma_{22} \frac{\partial^2 f}{\partial y^2}
 \right). $$

Our first task is to find a functional equation. The Laplace transforms will play the role of the generating functions. For $\vartheta = (\vartheta_1 , \vartheta_2) $ and $f$ a function with a non-exponential growth, we shall note
$$L (f)(\vartheta) = \int_0^{\infty} \int_0^{\infty}  f(x,y) e^{-(\vartheta_1 x + \vartheta_2 y)} \mathrm{d}x  \mathrm{d}y, \qquad \forall \Re \vartheta_i >0.$$
The functional equation takes the form
\begin{equation}
\label{eq:fec}
\gamma (\vartheta) L(f)(\vartheta) = \frac{1}{2} \left[
 \sigma_{11} L \left(\frac{\partial f}{\partial x} (0,\cdot)\right) (\vartheta_2 )
 + 
  \sigma_{22}  L \left( \frac{\partial f}{\partial y} (\cdot,0)\right) (\vartheta_1 )
  \right],
\end{equation}
where we have noted
\begin{equation}
\label{eq:kernel_continuous}
\gamma (\vartheta) = \frac{1}{2}\textstyle \sum_{i,j=1}^2 \displaystyle\sigma_{ij} \vartheta_1^2
=  \frac{1}{2} ( \sigma_{11} \vartheta_1^2
+ 2 \sigma_{12} \vartheta_1 \vartheta_2
+ \sigma_{22} \vartheta_2^2 ).
\end{equation}
The above quantity is called the kernel of the functional equation.
\begin{proof}[Proof of Equation \eqref{eq:fec}]
The proof is based on a simple integral calculus on the Laplace transform. If $f$ is not  growing too quickly, we obtain the three following equalities by doing an integration by part and remembering that $f=0$ on the boundary of the quarter plane:
\begin{align}
L(f)
&= \label{1}
\frac{1}{\vartheta_1^2} 
\left(
\int_0^\infty  \frac{\partial f}{\partial x}(0,y) e^{ -\vartheta_2 y} \mathrm{d}y  
+ \int_0^{\infty} \int_0^{\infty}  \frac{\partial^2 f}{\partial x^2}(x,y) e^{-(\vartheta_1 x + \vartheta_2 y)} \mathrm{d}x  \mathrm{d}y
\right)
\\
&= \label{2}
\frac{1}{\vartheta_2^2} 
\left(
\int_0^\infty  \frac{\partial f}{\partial y}(x,0) e^{ -\vartheta_1 x} \mathrm{d}x 
+ \int_0^{\infty} \int_0^{\infty}  \frac{\partial^2 f}{\partial y^2}(x,y) e^{-(\vartheta_1 x + \vartheta_2 y)} \mathrm{d}x  \mathrm{d}y
\right)
\\ &= \label{3}
 \frac{1}{\vartheta_1 \vartheta_2} 
\int_0^{\infty} \int_0^{\infty}  \frac{\partial^2 f}{\partial x \partial y}(x,y) e^{-(\vartheta_1 x + \vartheta_2 y)} \mathrm{d}x  \mathrm{d}y.
\end{align}
It is then sufficient to make the following linear combination of the above identities
$$
\frac{1}{2} [ \sigma_{11} \vartheta_1^2 \ (\text{\ref{1}})
 +  \sigma_{22} \vartheta_2^2 \  (\text{\ref{2}})
 + 2 \sigma_{12} \vartheta_1 \vartheta_2   (\text{\ref{3}}) ],
$$
in such a way that $\gamma (\vartheta)$ appears on one side and $\mathcal{G}f$ on the other side. We obtain 
\begin{multline*}
\gamma (\vartheta) L(f)(\vartheta) =
   \int_0^{\infty} \int_0^{\infty}  \mathcal{G} f(x,y)  e^{-(\vartheta_1 x + \vartheta_2 y)} \mathrm{d}x  \mathrm{d}y+\\ \frac{1}{2} \left[ \sigma_{11} \int_0^\infty \frac{\partial f}{\partial x} (0,y) e^{-\vartheta_2 y} \mathrm{d} y 
 +
 \sigma_{22} \int_0^\infty \frac{\partial f}{\partial y} (x,0) e^{-\vartheta_1 x} \mathrm{d} x \right]
.
\end{multline*}
We conclude by using the equality $\mathcal{G}f=0$.
\end{proof}

Our second task is to analyze the basic properties of the kernel \eqref{eq:kernel_continuous}. By studying this polynomial equation of degree two and noticing that $\det{\Sigma} \geqslant 0$ we easily find $\Theta_1(\vartheta_2)$ and $\Theta_2(\vartheta_1)$, the linear functions defined by $\gamma (\Theta_1(\vartheta_2), \vartheta_2))
=\gamma (\vartheta_1, \Theta_2(\vartheta_1))
= 0
$:
$$
\Theta_1(\vartheta_2) = \frac{-\sigma_{12} \pm i \sqrt{\det{\Sigma}}}{\sigma_{11}} \vartheta_2,
\qquad
\Theta_2(\vartheta_1) = \frac{-\sigma_{12} \pm i \sqrt{\det{\Sigma}}}{\sigma_{22}} \vartheta_1.
$$
We note $k= (-\sigma_{12} + i \sqrt{\det{\Sigma}})/{\sigma_{11}} = \sqrt{\frac{\sigma_{22}}{\sigma_{11}}} e^{i \theta}
$ and $l=(-\sigma_{12} + i \sqrt{\det{\Sigma}})/{\sigma_{22}} =
\sqrt{\frac{\sigma_{11}}{\sigma_{22}}} e^{i \theta}
$, where $\theta$ is introduced in Equation \eqref{exp_theta}. The angle $\theta$ is such that
$$ \tan (\theta)= - \frac{\sqrt{\det{\Sigma}}}{\sigma_{12}},
\qquad \cos (\theta)= - \frac{\sigma_{12}}{\sqrt{\sigma_{11}\sigma_{22}}},
\qquad
\sin (\theta)=\sqrt{1 - \frac{\sigma_{12}^2}{\sigma_{11} \sigma_{22}}}
.$$ 
With these notations one can rewrite the kernel as follows:
$$\gamma (\vartheta)= (\vartheta_1-k \vartheta_2)(\vartheta_1-  \overline{k} \vartheta_2)= (\vartheta_2-l \vartheta_1)(\vartheta_2-  \overline{l} \vartheta_1),\qquad \forall \vartheta_1,\vartheta_2\in{\bf C}.
$$

The third point consists in finding, starting from the functional equation \eqref{eq:fec}, explicit expressions for $L(\frac{\partial f}{\partial x} (0,\cdot)) (\vartheta_2 )$ and $L( \frac{\partial f}{\partial y} (\cdot,0)) (\vartheta_1 )$, that we will note $L_2 (\vartheta_2)$ and $L_1 (\vartheta_1)$, respectively. Using then the functional equation and the inverse Laplace transform we will find $f$. The scheme of the demonstration happens to be the same as in the discrete case: we are going to show that $L_1 (\vartheta_1)$ satisfies a boundary value problem on a certain domain $\mathscr{G}_{\Theta_1}$ (which is the analogue of the domain $\mathscr{G}_X$ in \eqref{Notations}). This will eventually lead to 
\begin{equation}
\label{eq:expression_L_1}
L_1(\vartheta_1)= \frac{\mu_1}{ \vartheta_1^{{\pi}/{\theta}}},
\end{equation}
where $\mu_1$ is some non-zero constant.

\unitlength=0.6cm
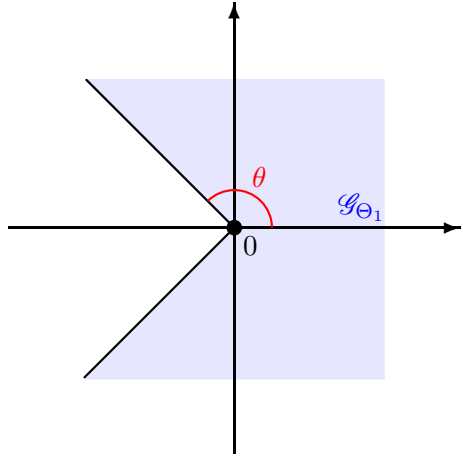
\begin{figure}[t]

\vspace{10mm}
     
         \begin{tikzpicture}
           \linethickness{2mm}
   \draw[draw=none,fill=blue!10] (-2,2) -- (2,2) -- (2,-2) -- (-2,-2) -- (0,0) -- cycle;
     \end{tikzpicture}
     
     \vspace{-25mm}

    \begin{picture}(0,-15)
    \thicklines
    {\put(0,0){\textcolor{black}{\circle*{0.3}}}}
    \put(-5,0){\vector(1,0){10}}
    \put(0,-5){\vector(0,1){10}}
    \put(0,0){\line(-1,1){3.28}}
    \put(0,0){\line(-1,-1){3.32}}
    {\psarc[linecolor=red](0,0){0.5}{0}{135}}
    \put(0.4,0.9){\textcolor{red}{$\theta$}}
    \put(2.3,0.3){\textcolor{blue}{$\mathscr{G}_{\Theta_1}$}}
    \put(0.2,-0.6){$0$}
     \end{picture}

\vspace{27.5mm}

\caption{The domain $\mathscr{G}_{\Theta_1}$ is a cone of opening angle $2\theta$}
\label{fig:ex_domain}
\end{figure}

\begin{proof}[Proof of Equation \eqref{eq:expression_L_1}]
Let us first introduce $\mathscr{G}_{\Theta_1}$ as the open set bounded by the curve $\Theta_1([0, \infty])$,
%(because $\vartheta_2=0$ and $\vartheta_2=\infty$ are the two points such as $\vartheta_1 \mapsto \gamma(\vartheta_1,\vartheta_2)$ has a root of order two), 
see Figure \ref{fig:ex_domain} for a graphical representation.
Thanks to an analytic continuation, we shall extend the domain of definition of $L_1$ from $\{\vartheta_1\in{\bf C} :\Re \vartheta_1 > 0 \}$ to $\mathscr{G}_{\Theta_1}$. On the set
$$\{ (\vartheta_1,\vartheta_2)\in {\bf C}^2 : \Re \vartheta_1 >0 , \Re \vartheta_2>0 , \gamma(\vartheta)=0 \}$$ 
we have $\sigma_{11} L_2(\vartheta_2) + \sigma_{22} L_1(\vartheta_1)=0$. This means that  $\sigma_{11} L_2( \vartheta_2) + \sigma_{22} L_1({k} \vartheta_2)=0$ and $\sigma_{11} L_2( \vartheta_2) + \sigma_{22} L_1({\overline k} \vartheta_2)=0$ for all $\vartheta_2$ such as $\Re \vartheta_2 >0$. Thanks to these relations, $L_1$ is now well defined and continuous on $\overline{\mathscr{G}_{\Theta_1}}\setminus \{0\}$ and analytic on $\mathscr{G}_{\Theta_1}$ (this is the analogue of Lemma \ref{lem:BPV} conditions {\rm\ref{class_function}} and {\rm\ref{class_function_2}}). Furthermore we have $L_1(k t)=L_1({\overline{k}} t)$ for all $t\in(0,\infty)$, which means that $L_1(\vartheta_1)=L_1({\overline{\vartheta_1}})$ for all $\vartheta_1$ in the boundary of $\mathscr{G}_{\Theta_1}$ except at $0$ (which is the analogue of Lemma \ref{lem:BPV} {\rm\ref{boundary_condition}}). Therefore, we have shown that $L_1 (\vartheta_1)$ satisfies the same boundary value problem as the one stated in Lemma \ref{lem:BPV}.

To solve this boundary value problem we introduce $w(\vartheta_1)=1/{\vartheta_1^{{\pi}/{\theta}}}$. Obviously $w(\vartheta_1)$ is a conformal mapping from $\mathscr{G}_{\Theta_1}$ to ${\bf C}\setminus {\bf R}_{-}$, which besides satisfies $w(k t)=w({\overline{k}} t)$ for all $t\in(0,\infty)$. To avoid any problem at $0$, rather than $L_1(\vartheta_1)$ we consider $\mathcal K(\vartheta_1)$, defined by
$$
\mathcal K(\vartheta_1) = \frac{1}{L_1(\vartheta_1) - z_0}
$$
for any given non-real $z_0$.
%As in the discrete case we construct a function $D(x)$ such that $\mathcal K(x)-D(x)$ satisfies the folowing analogue of {\rm\ref{class_function}}, {\rm\ref{class_function_2_tilde}} and {\rm\ref{boundary_condition}}.
The function $\mathcal{K}(\vartheta_1)$ has a finite number of poles $x_1,\ldots ,x_q$ on ${\mathscr{G}_{\Theta_1}}$, that we can cancel thanks to $w$ and some polynomials $Q_\ell$, according to the same procedure as in Section \ref{subsec:1}. More specifically, it is possible to construct 
$$
D(\vartheta_1)= \textstyle\sum \displaystyle_{\ell=1}^q Q_\ell \left( \frac{1}{w(\vartheta_1)-w(x_\ell)} \right)
$$
such that $\mathcal{K}(\vartheta_1) - D(\vartheta_1)$ is well defined and continuous on $\overline{\mathscr{G}_{\Theta_1}}$ (condition Lemma \ref{lem_constant} {\rm\ref{class_function_2_tilde}}) and is analytic on ${\mathscr{G}_{\Theta_1}}$ (condition Lemma \ref{lem:BPV} {\rm\ref{class_function}}). Furthermore for all $\vartheta_1$ on the boundary of $\mathscr{G}_{\Theta_1}$,
          $
              \mathcal{K}(\vartheta_1) - D(\vartheta_1)=\mathcal{K}(\overline{\vartheta_1}) - D(\overline{\vartheta_1})
          $, i.e.,
           $
              \mathcal{K}(k t) - D(k t)=\mathcal{K}(\overline{k} t) - D(\overline{k} t)
          $ for all $t\in[0,\infty)$ (condition Lemma \ref{lem:BPV} {\rm\ref{boundary_condition}}). %We have a new boundary value problem.
%It is possible because $ \frac{1}{w(x)-w(x_\ell)}$ has a pole of order $1$ at $x_\ell$ because $w$ is a conformal mapping.
%The goal is to show that the last function is a constant (which is $0$). 
With the key Lemma \ref{lem_constant}, we deduce that $\mathcal K(\vartheta_1)-D(\vartheta_1)$ is a constant function, which turns out to be $0$.

We readily deduce that $L_1(\vartheta_1)= z_0 + {1}/{S(w(\vartheta_1))}$, where $$S(X)=\textstyle\sum_{\ell=1}^q\displaystyle Q_\ell \left( \frac{1}{X-w(x_\ell)} \right).$$ As for the discrete case it is possible to show that ${1}/{S(X)}$ must be a polynomial, and we finally obtain the formula \eqref{eq:expression_L_1}.
\end{proof}

Similarly, one has the expression (where $\mu_2$ is some non-zero constant)
\begin{equation}
\label{eq:expression_L_2}
L_2(\vartheta_2)= \frac{\mu_2}{ \vartheta_2^{{\pi}/{\theta}}}.
\end{equation}
Further, the constants $\mu_1$ and $\mu_2$ in \eqref{eq:expression_L_1} and \eqref{eq:expression_L_2} are related together: evaluating the functional equation at any point such that $\gamma(\vartheta)=0$, we obtain
\begin{equation*}
     \mu_2=\mu_1 \left(\frac{\sigma_{22}}{\sigma_{11}}\right)^{1-\pi/(2\theta)}.
\end{equation*}

For example, if we take $\Sigma $ equal to the identity matrix and thus $\theta = {\pi}/{2}$, the functional equation \eqref{eq:fec} together with \eqref{eq:expression_L_1} and \eqref{eq:expression_L_2} provide 
$$L(f)(\vartheta)= \frac{1}{\vartheta_1^2 + \vartheta_2^2} \left( \frac{1}{\vartheta_1^2} +\frac{1}{\vartheta_2^2} \right) = \frac{1}{\vartheta_1^2 \vartheta_2^2},$$
which is the Laplace transform of the function $f(x,y)=xy$.

In the general case we find
$$L(f)(\vartheta)= 
\frac{\sigma_{11} {\mu_1}/{ \vartheta_1^{{\pi}/{\theta}}}
+ \sigma_{22} \mu_2/{ \vartheta_2^{{\pi}/{\theta}}}}{\sigma_{11} \vartheta_1^2
+ 2 \sigma_{12} \vartheta_1 \vartheta_2
+ \sigma_{22} \vartheta_2^2 }.
$$
Taking the inverse Laplace transform, the solution of the problem has the following form: 
$$f(x,y)= \rho'^{\frac{\pi}{\theta}} \sin \left(\frac{\vartheta' \pi}{\theta}\right),$$
where $\phi(x,y)=(x',y')$ (the transformation $\phi$ being defined in \eqref{eq:change_variable}), $x'=\rho' \cos (\vartheta')$ and $y= \rho' \sin(\vartheta')$. The function $f(x,y)$ defined above is indeed the unique continuous harmonic function (see \eqref{eq:expression_continuous_harmonic_function}), up to multiplicative constants.

\section*{Acknowledgments}
\setcounter{equation}{0}

We wish to thank D.~Denisov, G.~Fayolle, R.~Garbit, I.~Ignatiuk-Robert, I.~Kurkova, M.~Peign\'e, L.~V\'eron, V.~Wachtel, W.~Woess for useful discussions. We also thank an associate editor and a referee for their comments and suggestions, which led us to improve the presentation of the paper.

\end{document}